\theoremstyle{plain}
\newtheorem{thm}{Theorem}
\newtheorem{lem}{Lemma}
\newtheorem{cor}{Corollary}
\newtheorem{con}{Conjecture}
\newtheorem*{thm*}{Theorem}
\newtheorem{prop}{Proposition}
\theoremstyle{definition}
\newtheorem{defi}{Definition}
\newtheorem{rema}{Remark}
\newtheorem{ques}{Question}
\newtheorem*{ques*}{Question}
\newcommand{\Z}{\mathbb{Z}}
\newcommand{\Soc}{\textnormal{Soc}}
\newcommand{\Hom}{\textnormal{Hom}}
\newcommand{\Aut}{\textnormal{Aut}}
\begin{document}

\title{Braces of order $p^2q$}
\author{Carsten Dietzel}
\email{carstendietzel@gmx.de}
\address{Institute of algebra and number theory, University of Stuttgart, Pfaffenwaldring 57, 70569 Stuttgart, Germany}
\date{\today}

\begin{abstract}
In this article we classify the left braces of order $p^2q$ where $p,q$ are primes fulfilling $q > p+1$.

This classification includes a proof of three conjectures of Guarnieri and Vendramin (\cite[Conjectures 6.2-6.4]{Vendramin_skew}) concerning the number of isomorphism classes of left braces of order $p^2q$ for certain values of $p,q$.
\end{abstract}

\maketitle

\section*{Introduction}

This article will revolve around ring-like structures called \emph{braces} which under this name first occurred in \cite{Rump_braces_radical_rings}. But the \emph{concept} of brace is slightly older:

In Rump's works, the first occurrence of the concept is in \cite{Rump_decomposition_theorem}, then in the form of \emph{linear cycle sets} which he used to investigate set-theoretical solutions of the quantum Yang-Baxter equation which reads
\begin{equation}
R^{12}R^{13}R^{23} = R^{23}R^{13}R^{12} \tag{YB} \label{eq:yang_baxter}
\end{equation}
where $R:X \times X \to X \times X$ is a bijection and $X$ is a finite set. Studying \eqref{eq:yang_baxter} has been motivated by Drinfeld in \cite{drinfeld_quantum} in order to construct classes of solutions of the classical (i.e. linear) Yang-Baxter equation.

Rump's linear cycle sets, however, have another equivalent in the work of Etingof, Schedler and Soloviev who, in \cite{Etingof_sybe}, particularly showed that a broad class of set-theoretical solutions of the Yang-Baxter equation can be constructed from \emph{bijective 1-cocycles}.

After having dropped the words \emph{brace}, \emph{linear cycle set}, and \emph{bijective 1-cocycle} we will now explain what they mean and sketch the interconnections between the respective objects. We will introduce these as generalizations of a more familiar class of algebraic objects, i.e. the \emph{radical rings} (\cite{jacobson}):

A \emph{radical ring} $(A,+,\cdot)$ is a ring - not necessarily with $1$ - which coincides with its own Jacobson radical, or, equivalently, which under the \emph{Jacobson circle operation} given by
\begin{equation} \label{eq:jacobson_circle}
a \circ b := a + b + a \cdot b \tag{J}
\end{equation}
becomes a group. Therefore, a radical ring always carries two group structures, namely, its common additive group $(A,+)$ and its \emph{adjoint group} given by $(A,\circ)$.

Repeating the ring axioms, one might thus define a radical ring as an abelian group $(A,+)$ with a left- and right-distributive multiplication $\cdot$ such that $A$ also becomes a group under the circle operation given by Eq. \eqref{eq:jacobson_circle}.

If the right-distributive law is dropped we arrive at the notion of \emph{left brace}, which consequently is an abelian group with a left-distributive multiplication $\cdot$ such that $(A,\circ)$ defines a group structure on $A$ - similarly, one defines a \emph{right brace}.

It can be shown that a left brace can alternatively be defined as an abelian group $(A,+)$ with a binary operation $\cdot: A \times A \to A$ which fulfils the following three axioms
\begin{align}
a \cdot (b + c) & = a \cdot b + a \cdot c \tag{B1} \\
(a \cdot b + a + b) \cdot c & = a \cdot (b \cdot c) + a \cdot c + b \cdot c  \tag{B2} \\
x \mapsto a \cdot x + x & \textnormal{ is bijective for each $a \in A$} \tag{B3}.
\end{align}
The circle operation on $A$ is defined as above and defines a second group structure on $A$.

Radical rings can therefore be regarded as left braces which additionally fulfil the axiom
\begin{equation}
(a + b) \cdot c = a \cdot c + b \cdot c \tag{B4}.
\end{equation}

From now on we denote the additive group of a left brace as $A^+$ and the respective adjoint group as $A^{\circ}$.

The interplay between the groups $A^{\circ}$ and $A^+$ is given by the \emph{adjoint action}
\begin{align*}
\bullet: A^{\circ} \times A^+ & \to A^+ \\
(a,x) & \mapsto a \bullet x := a \cdot x + x 
\end{align*}
and it turns out that $\bullet$ makes $A^+$ a (left) $A^{\circ}$-module such that the identity map $\pi: A \to A$ induces a bijective $1$-cocycle $A^{\circ} \to A^+$ for this action, i.e. we have for $g,h \in A^{\circ}$ the identity
\[
\pi(gh) = \pi(g) + g \bullet \pi(h).
\]
Vice versa, an abelian group $A$ which is a left module over a group $G$ becomes a brace by giving a bijective $1$-cocycle $\pi:G \to A$: if the module action is given by $\bullet$, defining on $A$ the binary operation $a \cdot b := \pi^{-1}(a) \bullet b - b$ and leaving the addition unchanged makes $A$ a left brace.

Another equivalent notion is the one of a \emph{linear cycle set}, that is, an abelian group $A$ with a multiplication map $\cdot$ which fulfils the axioms
\begin{align}
a \cdot (b + c) & = a \cdot b + a \cdot c \tag{L1} \\
(a + b) \cdot c & = (a \cdot b) \cdot (a \cdot c) \tag{L2}. 
\end{align}
Defining ${}^ba$  as the inverse of $a$ under the right multiplication map $x \mapsto x \cdot b$, one can then associate with $A$ a left brace by the identity $b \circ a = {}^ba + b$, $\circ$ being the adjoint group operation.

All of the above equivalences can be found, with proofs, in \cite{Rump_cyclic_braces}.

The applications of left braces resp. bijective $1$-cocycles resp. linear cycle sets are manifold: a few examples are the theory of Bieberbach groups, orderable groups and solutions of the SYBE. A good overview over these and other occurrences of braces in different mathematical disciplines is given in the survey article \cite{Rump_brace_classical}.

After the advent of left braces, many variations of these structures have been investigated - two examples are \emph{Hopf braces} (\cite{Vendramin_hopf}) and \emph{skew left braces} (\cite{Vendramin_skew}).

It is natural to ask for a classification of \emph{all} left braces up to isomorphism. However, this question might be too general to be answered in a satisfying way.

Better questions to ask are the following:

\begin{ques}
Classify / enumerate the isomorphism classes of left braces of order $n$.
\end{ques}

\begin{ques}
Classify / enumerate the isomorphism classes of left braces with given additive / adjoint group.
\end{ques}

Concerning the classification aspect, the second question has recently been completely answered by Rump for $A^+$ being a cyclic group, see \cite{Rump_cyclic_braces} and \cite{Rump_cyclic_2}.

Here, we are mainly interested in the first question:

We note two achievements which will be of relevance for this article:

\begin{itemize}
\item Bachiller, in \cite{Bachiller_classification}, classifies all left braces of orders $n = p^2,p^3$ where $p$ is a prime.

\item Guarnieri and Vendramin, in \cite{Vendramin_skew}, use computational methods to determine for most $n \leq 120$ (except for $n \in \{32, 64,81,96 \}$) the numbers $b(n)$, being defined as the number of non-isomorphic braces of fixed order $n$.

See \cite[p.2530]{Vendramin_skew} for the results.
\end{itemize}

From their computations they derive the following conjectures about the behaviour of $b(n)$:

\begin{con}(\cite[Conjecture 6.2]{Vendramin_skew})
If $q > 3$ is prime then
\[
b(4q) =
\begin{cases}
11 & q \equiv 1 \mod 4 \\
9 & q \equiv 3 \mod 4
\end{cases}
\]
\end{con}

\begin{con}(\cite[Conjecture 6.3]{Vendramin_skew})
If $q > 3$ is prime then\footnote{The reader may have noted that the original statement of the conjecture did not include the case $q \equiv 8 \mod 9$; the reason is that the first occurrence of this case is $n = 9 \cdot 17 = 153$ which is not covered by the computations of Guarnieri and Vendramin.}
\[
b(9q) =
\begin{cases}
14 & q \equiv 1 \mod 9 \\
4 & q \equiv 2,5,8 \mod 9 \\
11 & q \equiv 4,7 \mod 9
\end{cases}
\]
\end{con}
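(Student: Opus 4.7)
The plan is to reduce the classification of left braces of order $9q$ to the enumeration of certain group homomorphism orbits, in three steps.

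\emph{Hall decomposition.} For a brace $A$ of order $9q$ (with $q \geq 5$ prime, matching the paper's hypothesis $q > p+1 = 4$), I would first show that the Sylow $q$-subgroup of $A^\circ$ and the Sylow $q$-subgroup of $A^+$ coincide as a single subset $Q \subseteq A$ which forms a subbrace; this follows from brace-theoretic Sylow arguments exploiting $\gcd(9,q) = 1$. An analogous statement gives a complementary Sylow $3$-subbrace $P$ of order $9$. The subbrace $Q$ is, by primality, the trivial brace on $\mathbb{Z}_q$. For $q \geq 5$ we have $\gcd(q, |\Aut(\mathbb{Z}_9)|) = \gcd(q, 6) = 1$ and $\gcd(q, |\Aut(\mathbb{Z}_3 \times \mathbb{Z}_3)|) = \gcd(q, 48) = 1$, so the $\bullet$-action of $Q^\circ$ on $P^+$ is automatically trivial, and $A$ decomposes as a one-sided brace semidirect product $A = P \ltimes_\alpha Q$ specified by a group homomorphism $\alpha : P^\circ \to \Aut(Q^+) \cong \mathbb{Z}_{q-1}$.

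\emph{Enumeration and orbit formula.} By Bachiller's classification \cite{Bachiller_classification}, the brace $P$ belongs to one of four isomorphism classes: the trivial braces $P_1$ on $\mathbb{Z}_9$ and $P_2$ on $\mathbb{Z}_3 \times \mathbb{Z}_3$, and the two mixed braces $P_3$ with $P_3^+ = \mathbb{Z}_3 \times \mathbb{Z}_3$ and $P_3^\circ = \mathbb{Z}_9$, and $P_4$ with $P_4^+ = \mathbb{Z}_9$ and $P_4^\circ = \mathbb{Z}_3 \times \mathbb{Z}_3$. I would then compute the brace automorphism groups $\Aut(P_i)$ explicitly. Since $\Aut(Q^+) \cong \mathbb{Z}_{q-1}$ is abelian, two data $(P_i, \alpha)$ and $(P_i, \beta)$ produce isomorphic braces iff $\alpha$ and $\beta$ are in the same $\Aut(P_i)$-orbit on $\Hom(P_i^\circ, \mathbb{Z}_{q-1})$ under precomposition. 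Hence
\[
 b(9q) \;=\; \sum_{i=1}^{4} \bigl| \Hom(P_i^\circ, \mathbb{Z}_{q-1}) \,/\, \Aut(P_i) \bigr|,
\]
which depends only on $d := \gcd(9, q-1) \in \{1,3,9\}$---these three values correspond precisely to the three congruence classes $q \equiv 2,5,8$; $q \equiv 4,7$; and $q \equiv 1 \pmod 9$.

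\emph{Case analysis and main obstacle.} For $d=1$ only the zero homomorphism exists and $b(9q)=4$, matching the first case of the conjecture. For $d=3$ the target reduces to the unique $\mathbb{Z}_3 \subset \mathbb{Z}_{q-1}$, and the $\Aut(P_i)$-orbit counts should sum to $11$. For $d=9$, homomorphisms into the $\mathbb{Z}_9$-subgroup contribute additional orbits exactly for $i \in \{1,3\}$ (where $P_i^\circ \cong \mathbb{Z}_9$), bringing the total to $14$. The two technically hardest steps are (i) rigorously establishing the Hall decomposition and the triviality of the cross action without extra assumptions beyond $q > p+1$, which is the structural core of the paper; and (ii) computing the mixed automorphism groups $\Aut(P_3)$ and $\Aut(P_4)$ exactly---they are subgroups intermediate between $GL_2(\mathbb{F}_3)$ and $(\mathbb{Z}/9\mathbb{Z})^{\times}$---precisely enough that the orbit counts fall out as $11$ and $14$.
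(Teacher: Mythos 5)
Your overall strategy coincides with the paper's: establish that the Sylow $q$-part is a trivial ideal so that $A = P \ltimes Q$, invoke Bachiller's list of the four braces $P$ of order $9$, and reduce the isomorphism problem to counting $\Aut(P)$-orbits on $\Hom(P^{\circ},\Z_q^{\times})$ (your observation that the $\Z_q$-coordinate of an automorphism acts trivially because $\Aut(\Z_q)$ is abelian is exactly the paper's Lemma on $A^{(\varphi_p,\varphi_q)} = A^{(\varphi_p,\mathrm{id})}$). However, there is a genuine error in the input to your orbit count: you assert that the two nontrivial braces of order $9$ are ``mixed'', with $P_3^+ \cong \Z_3\times\Z_3$, $P_3^{\circ}\cong\Z_9$ and $P_4^+\cong\Z_9$, $P_4^{\circ}\cong\Z_3\times\Z_3$. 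That is the $p=2$ phenomenon, and it fails for $p=3$: for odd $p$ the adjoint group of every brace of order $p^2$ is isomorphic to its additive group. Concretely, in $B^{3,3}$ one computes $(x,y)^{\circ 3} = (3x+3y^2,\,3y) = (0,0)$, so $(B^{3,3})^{\circ}\cong\Z_3^2$, while in $B^{9}$ the element $1$ satisfies $1^{\circ 3} = 3 \neq 0$, so $(B^{9})^{\circ}\cong\Z_9$. The paper makes this explicit via the isomorphisms $\gamma(x) = x - pC(x,2)$ and $\delta(x,y) = (x - C(y,2),\,y)$, which involve division by $2$ and hence exist only for $p\neq 2$ --- this is precisely why the paper separates the cases $p\neq 2$ and $p=2$.

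This error propagates into your case analysis. For $9\mid q-1$ the three additional classes come from homomorphisms of order $9$ out of the two braces whose \emph{adjoint} group is $\Z_9$, namely $T^{9}$ and $B^{9}$ (one extra class from $T^{9}$, two from $B^{9}$ since $\Aut(B^{9}) = 1+3\Z_9$ acts trivially modulo $3$), not from $T^{9}$ and the brace with additive group $\Z_3\times\Z_3$ as you claim. For $3\mid q-1$, $9\nmid q-1$, your version of $P_3$ would contribute orbits of the $3$-element set $\Hom(\Z_9,\Z_3)$, hence at most $3$ classes, whereas the correct contribution of $B^{3,3}$ is $4$: the orbits of $\Hom(\Z_3^2,\Z_3)$ under the order-$6$ group of matrices $\begin{pmatrix} d^2 & b\\ 0 & d\end{pmatrix}$, which splits the nonzero vectors $(x\ 0)$ into a quadratic-residue and a non-residue orbit. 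So the totals $11$ and $14$ cannot be recovered from your setup without first correcting the adjoint groups; beyond that, the proposal defers exactly the computations (the groups $\Aut(P_i)$ and the orbit counts) that constitute the substance of the proof.
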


\begin{con}(\cite[Conjecture 6.4]{Vendramin_skew})
If $p,q$ are primes such that $p < q$ and $p \nmid q-1$ then $b(p^2q) = 4$.
\end{con}

The last of these conjectures has already been proved by Smoktunowicz (\cite{Smoktunowicz_note_on_sybe}).

In this article we will, more generally, prove these three conjectures by giving a complete classification of the non-isomorphic braces of order $p^2q$ with primes $p,q$ fulfilling $q > p+1$.

We define in \autoref{sec:generalities_braces} braces and modules over braces and introduce a special case of Rump's construction of semidirect products (see \cite{Rump_semidirect}) - if $B$ is an abelian group which is a module over a brace $A$ then there is a semidirect product $A \ltimes B$ coming from this action. This \emph{external} notion of semidirect product will be given an \emph{internal} counterpart.

Making $B$ an $A$-module will turn out to be the same as giving a group homomorphism from $A^{\circ}$ to $\Aut(B)$ (or, simply, making $B$ an $A^{\circ}$-module in the common sense).

In \autoref{sec:structure_result}, we show that every brace of order $p^2q$ ($q > p+1$) is a semidirect product $A_p \ltimes A_q$ - ($A_p, A_q$ being the primary components of the additive group) as described above. Therefore, an exhaustive list of all of these braces can be given by describing all possible $A_p$-module structures on $A_q$.

We will show that if $A_p$ is fixed then the isomorphism classes of semidirect products $A_p \ltimes A_q$ are in a bijective correspondence with the orbits of $\Hom(A_p^{\circ},\Z_q^{\times})$ under the action of $\Aut(A_p)$ (meaning brace automorphisms).

In \autoref{sec:case_p_not_2} and \autoref{sec_case_p_2} this plan will then be implemented and it will essentially turn out that each module structure comes from an action on $A_q$ by $p$-th or $p^2$-th roots of unity $\mod q$.

The final classification results will consequently be dependent of the residue class of $q \mod p^2$.

These results will confirm Guarnieri's and Vendramin's conjecture for $p=2$ and will generalize them for $p \neq 2$ showing that
\[
b(p^2q) =
\begin{cases}
4 & p \nmid q-1 \\
p+8 & p \mid q-1, p^2 \nmid q-1 \\
2p+8 & p^2 \mid q-1.
\end{cases}
\]
which clearly includes conjecture 6.3 as the case $p=3$.

\section{Generalities on braces} \label{sec:generalities_braces}

\begin{defi}
A \emph{left brace} is an abelian group $A$ - the addition denoted here by $+$ - together with a multiplication  $\cdot $ such that the following three axioms are fulfilled:
\begin{align}
a \cdot (b + c) & = a\cdot b + a \cdot c \tag{B1} \label{eq:brace_axiom_1}\\
(a \cdot b + a + b) \cdot c & = a \cdot (b \cdot c) + a \cdot c + b \cdot c \tag{B2} \label{eq:brace_axiom_2} \\
x \mapsto a \cdot x + x & \textnormal{ is bijective for each $a \in A$} \label{eq:brace_axiom_3} \tag{B3}
\end{align}
\end{defi}

It should be clear how \emph{homomorphisms} and \emph{isomorphisms} between braces are defined.

Throughout this article we will always assume $A$ to be finite.

Additionally to its additive structure, a brace always carries a second group structure:

\begin{prop} \label{prop:group_under_circle}
If $A$ is a brace, the operation $a \circ b := a + b + a \cdot b$ defines a group structure on $A$.
\end{prop}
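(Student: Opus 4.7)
I would verify the three group axioms for $(A,\circ)$ in the order: two-sided identity, associativity, inverses.

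For the identity I claim that $0 \in A$ works. The identity $a \cdot 0 = 0$ follows directly from (B1) by setting $b = c = 0$. The dual identity $0 \cdot a = 0$ is slightly more subtle: plugging $b = 0$ into (B2) and using $a \cdot 0 = 0$ yields $a \cdot (0 \cdot c) + 0 \cdot c = 0$ for all $a,c$; since also $a \cdot 0 + 0 = 0$, the injectivity of $x \mapsto a \cdot x + x$ guaranteed by (B3) forces $0 \cdot c = 0$. With both relations in hand, $0 \circ a = 0 + a + 0 \cdot a = a = a + 0 + a \cdot 0 = a \circ 0$ is immediate.

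For associativity I would expand both sides in $(A,+)$. In $(a \circ b) \circ c$ the troublesome term $(a + b + a \cdot b) \cdot c$ simplifies by (B2) to $a \cdot (b \cdot c) + a \cdot c + b \cdot c$, while in $a \circ (b \circ c)$ repeated application of (B1) distributes $a \cdot (b + c + b \cdot c)$ to $a \cdot b + a \cdot c + a \cdot (b \cdot c)$. Once both expansions are collected, they visibly coincide.

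For inverses, (B3) gives, for each $a$, a unique $a' \in A$ with $a \cdot a' + a' = -a$, equivalently $a \circ a' = 0$; so every element has a right inverse. The standard semigroup argument then promotes right inverses to two-sided inverses: picking a right inverse $a''$ of $a'$ and using the already-established associativity and two-sided identity,
\[
a' \circ a \;=\; (a' \circ a) \circ (a' \circ a'') \;=\; a' \circ (a \circ a') \circ a'' \;=\; a' \circ 0 \circ a'' \;=\; a' \circ a'' \;=\; 0.
\]
Hence $(A,\circ)$ is a group. The only mildly delicate step is $0 \cdot a = 0$, where (B2) and (B3) must be combined; everything else is manipulation inside $(A,+)$.
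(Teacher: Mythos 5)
Your proof is correct and follows essentially the same route as the paper: establish $a \cdot 0 = 0$ from (B1), deduce $0 \cdot c = 0$ by combining (B2) with the injectivity part of (B3), verify associativity by expanding both sides and invoking (B2), and obtain inverses from the surjectivity part of (B3). The only differences are cosmetic — you specialize (B2) at $b=0$ rather than $a=b=0$, and you write out the right-inverse-implies-left-inverse argument that the paper leaves implicit.
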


\begin{proof}
The proof can be found under \cite[Proposition 4]{Rump_braces_radical_rings} but for the reader's convenience, we reprove the statement here:

Denoting the neutral element of $A^+$ by $0$, we claim that this is also a neutral element for $\circ$. Indeed, Eq. \eqref{eq:brace_axiom_1} directly implies $a\cdot 0$ for all $a$, and therefore
\[
a \circ 0 = a + 0 + a \cdot 0 = a.
\]
Setting $a = b = 0$ in Eq. \eqref{eq:brace_axiom_2} results in $0 \cdot ( 0 \cdot c) + 0 \cdot c = 0$.

The equation $0 \cdot x + x = 0$ has, by \eqref{eq:brace_axiom_3}, a unique solution. By the above reasoning, $x = 0 \cdot c$ is such a solution, as is $x = 0$. We conclude $0 \cdot c = 0$. Therefore,
\[
0 \circ a = 0 + a + 0 \cdot a = a.
\]

Associativity follows from the calculation
\begin{align*}
a \circ (b \circ c) & = a \circ (b + c + b \cdot c) \\
& = a + b + c + b \cdot c + a \cdot ( b + c + b \cdot c) \\
& = a + b + c + b \cdot c + a \cdot b + a \cdot c + a \cdot (b \cdot c) \\
& \overset{(\textnormal{\ref{eq:brace_axiom_2}})}{=} a + b + a \cdot b + c + (a \cdot b + a + b) \cdot c \\
& = (a + b + a \cdot b) \circ c \\
& = (a \circ b) \circ c.
\end{align*}

From \eqref{eq:brace_axiom_3}, for each $a$ there is an $x$ fulfilling $a \cdot x + x = -a$. This $x$ also fulfils
\[
a \circ x = a + x + a \cdot x = a + (-a) = 0.
\]
$x$ is a right-inverse which consequently has to be a left-inverse, too.
\end{proof}

\begin{defi}
Let $A$ be a brace. The $\emph{adjoint group}$ $A^{\circ}$ is then defined as the group with the underlying set $A$, together with the group operation $a \circ b = a + b + a \cdot b$.
\end{defi}

The interplay between the groups $(A,+)$ and $(A,\circ)$ is as follows:

\begin{prop} \label{prop:braces_are_1_cocycles}
Setting $a \bullet x := a \cdot x + x$ defines an action (from the left) of $A^{\circ}$ on $A$. Furthermore, the identity map $A^{\circ} \to A$ is a bijective $1$-cocycle for this action.
\end{prop}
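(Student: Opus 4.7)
The plan is to verify the two axioms of a left group action and then directly unpack the 1-cocycle identity for the identity map.

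For the identity axiom, I note that the proof of \autoref{prop:group_under_circle} already establishes $0 \cdot x = 0$ for every $x \in A$, whence $0 \bullet x = 0 \cdot x + x = x$ is immediate. For the associativity relation $(a \circ b) \bullet x = a \bullet (b \bullet x)$, the left-hand side expands to $(a + b + a \cdot b) \cdot x + x$, and applying \eqref{eq:brace_axiom_2} to the summand $(a \cdot b + a + b) \cdot x$ rewrites it as $a \cdot (b \cdot x) + a \cdot x + b \cdot x + x$. Unfolding the right-hand side gives $a \cdot (b \cdot x + x) + b \cdot x + x$, which by \eqref{eq:brace_axiom_1} also equals $a \cdot (b \cdot x) + a \cdot x + b \cdot x + x$; the two sides therefore agree.

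For the 1-cocycle claim, writing $\pi$ for the identity map $A^\circ \to A$, we compute directly
\[
\pi(a \circ b) = a + b + a \cdot b = a + (a \cdot b + b) = \pi(a) + a \bullet \pi(b),
\]
and bijectivity of $\pi$ is trivial since it is the identity on the underlying set.

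The only genuinely nontrivial step is the invocation of \eqref{eq:brace_axiom_2} to establish associativity of the action; this axiom is essentially tailored to make the cocycle relation work, so once it is applied the rest of the argument is a matter of unpacking definitions. Note that \eqref{eq:brace_axiom_3} is not even needed for the verification above: it will become relevant if one wants to upgrade the set action to an action by automorphisms of $A^+$, but that lies beyond the statement of the proposition.
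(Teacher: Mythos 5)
Your proof is correct and takes essentially the same approach as the paper: establish $0 \bullet x = x$ from $0 \cdot x = 0$, verify $(a \circ b) \bullet x = a \bullet (b \bullet x)$ via \eqref{eq:brace_axiom_2} and \eqref{eq:brace_axiom_1}, and unpack the cocycle identity directly. The only cosmetic difference is that the paper writes the compatibility check as a single chain of equalities from left to right, whereas you expand both sides and meet in the middle.
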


\begin{proof}
We have $0x = 0 \cdot x + x = x$ and
\begin{align*}
(a \circ b) \bullet x & = (a + b + a \cdot b)\cdot x + x \\
& \overset{\textnormal{(\ref{eq:brace_axiom_2})}}{=} a \cdot x + b \cdot x + a \cdot (b \cdot x) + x \\
& = a \cdot (b \cdot x + x) + b \cdot x + x \\
& = a \cdot (b \bullet x) + (b \bullet x) = a \bullet (b \bullet x).
\end{align*}
Furthermore, the cocycle property for the identical map follows from the calculation
\[
\textnormal{id}(a \circ b) = a + b + a \cdot b = a + a \bullet b = \textnormal{id}(a) + a \bullet  \textnormal{id}(b).
\]
\end{proof}

For each abelian group $A$ there is a canonical way of making $A$ into a brace, i.e. by defining $a \cdot b = 0$ for any $a,b \in A$. The brace axioms are easily checked for this operation and it turns out that the underlying abelian group structure coincides with the adjoint group structure.

We will refer to the brace thus constructed as the \emph{trivial} brace associated with $A$. Consequently, we will call any brace fulfilling $a \cdot b = 0$ for each $a,b \in A$ a \emph{trivial} brace.

Clearly, this is the unique brace structure on the additive group $A$ with trivial adjoint action.

Due to the presence of both an addition as a multiplication there is a natural notion of \emph{left} and \emph{right ideal}:

\begin{defi}
Let $A$ be a brace. A \emph{left ideal} is an additive subgroup $B \leq A$ such that $aB \subseteq B$ holds for all $a \in A$. Similarly, a \emph{right ideal} is defined.

A subgroup is called an \emph{ideal} if it is a left and right ideal.
\end{defi}

It is not hard to see that each left resp. right ideal is also a subbrace.

The left ideals of $A$ can furthermore be specified as the additive subgroups $B$ of $A$ which are invariant under the adjoint action:

A left ideal $B$ fulfils for each $a \in A$
\[
a \bullet B \subseteq a \cdot B + B \subseteq B + B = B
\]
and if $B$ is a subgroup invariant under the adjoint operation then
\[
a \cdot B \subseteq a \bullet B - B \subseteq B - B = B
\]
thus proving what we claimed.

\begin{prop} \label{prop:socle_is_subbrace}
If $A$ is a brace then the set
\[
\Soc(A) := \{a \in A \vert \forall b \in A: a \cdot b = 0 \}
\]
is an ideal which, as a subbrace, is trivial.
\end{prop}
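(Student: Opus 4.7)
The plan is to exploit the characterization of $\Soc(A)$ as the kernel of the adjoint action from \autoref{prop:braces_are_1_cocycles}. Since $a \bullet x = x$ for all $x \in A$ is equivalent to $a \cdot x = 0$ for all $x \in A$, the socle coincides with the kernel of the induced homomorphism $A^\circ \to \Aut(A^+)$, and is therefore automatically a normal subgroup of $A^\circ$.

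First I would check that $\Soc(A)$ is an additive subgroup. For $s \in \Soc(A)$ and $b \in A$ one has $s \circ b = s + b + s \cdot b = s + b$, so on $\Soc(A)$ the operations $\circ$ and $+$ agree. Combined with the fact that $\Soc(A)$ is a subgroup of $A^\circ$, this yields closure under $+$; likewise the $\circ$-inverse of $s \in \Soc(A)$ is just $-s$, so $\Soc(A)$ is closed under additive inversion.

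To show that $\Soc(A)$ is a left ideal, by the criterion recorded just before the proposition it suffices to check that $\Soc(A)$ is stable under the adjoint action. Given $a \in A^\circ$ and $s \in \Soc(A)$, set $s' := a \circ s \circ a^{-1}$, which by normality lies in $\Soc(A)$. Applying the $1$-cocycle identity (\autoref{prop:braces_are_1_cocycles}) to both sides of $s' \circ a = a \circ s$ gives
\[
s' + s' \bullet a = a + a \bullet s.
\]
Since $s' \in \Soc(A)$ acts trivially, the left-hand side collapses to $s' + a$, and commutativity of $A^+$ yields $s' = a \bullet s$. Hence $a \bullet s \in \Soc(A)$, and $a \cdot s = a \bullet s - s \in \Soc(A)$.

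Right ideal stability is tautological: $s \cdot a = 0 \in \Soc(A)$ for every $s \in \Soc(A)$ and every $a \in A$; this same observation immediately shows that the induced brace multiplication on $\Soc(A)$ is identically zero, so $\Soc(A)$ is a trivial subbrace. The only mildly delicate step is the left ideal verification, where the absence of right distributivity prevents one from directly expanding $(a \cdot s + s) \cdot c$; routing the argument through normality in $A^\circ$ and the cocycle identity is the clean way around this.
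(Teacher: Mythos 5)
Your proof is correct. Note, though, that the paper does not actually prove this statement: it defers the ideal property to Rump's Proposition 7 and only remarks that triviality is immediate from the definition, so your argument is a genuinely self-contained replacement rather than a variant of the paper's. Your route is: identify $\Soc(A)$ as the kernel of $\rho:A^{\circ}\to\Aut(A^+)$ (a fact the paper records only \emph{after} the proposition, but which rests solely on \autoref{prop:braces_are_1_cocycles} and axioms (B1), (B3), so there is no circularity), deduce normality in $A^{\circ}$ for free, observe that $\circ$ and $+$ agree on elements of the socle to get an additive subgroup, and then convert normality into adjoint-invariance via the cocycle identity $x\circ y = x + x\bullet y$, which by the criterion stated just before the proposition gives the left ideal property; the right ideal property and triviality are tautological. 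This is clean and complete. It is worth pointing out that your key computation $a\circ s\circ a^{-1} = a\bullet s$ for $s\in\Soc(A)$ is exactly the content of the paper's later lemma on trivial subbraces (${}^{a\circ}b = a\bullet b$), and the equivalence ``normal in $A^{\circ}$ and contained in $\Soc(A)$ iff trivial ideal'' is the paper's subsequent lemma on trivial ideals; so your proof effectively establishes those two lemmas as a by-product, which is an economy the paper's citation-based treatment does not provide. The only stylistic caveat is that you invoke the kernel description of the socle before the paper formally states it, so in a final write-up you should make the one-line verification that $a\mapsto(x\mapsto a\bullet x)$ lands in $\Aut(A^+)$ explicit (additivity from (B1), bijectivity from (B3)).
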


\begin{proof}
For a proof that $\Soc(A)$ is an ideal we refer to \cite[Proposition 7]{Rump_braces_radical_rings}.

The triviality assumption, however, follows immediately from the definition of $\Soc(A)$.
\end{proof}

\begin{defi}
The subset $\Soc(A)$ defined in $\autoref{prop:socle_is_subbrace}$ is called the \emph{socle} of $A$.
\end{defi}

It is easily to see that $\Soc(A)$ can also be identified as the kernel of the homomorphism $\rho: A^{\circ} \to \Aut(A^+)$ induced by the adjoint action, i.e. it consists exactly of those elements which act trivially by the adjoint action.

Moreover, the socle of a brace has the property that it can be factored out, thus leading to the notion of \emph{retraction} of a brace (see \cite[Sections 1,2]{Rump_braces_radical_rings} )
But for technical reasons we will look at $\Soc(A)$ as a special case of a larger class of trivial subbraces which we will call \emph{trivial}:

\begin{defi}
An ideal $B \leq A$ is called \emph{trivial} if it is contained in $\Soc(A)$.
\end{defi}

The trivial ideals enjoy the following useful property:

\begin{lem}
Let $A$ be a brace. Then the trivial ideals of $A$ coincide with the additive subgroups of $A$ which are contained in $\Soc(A)$ and which are normal subgroups of $A^{\circ}$.
\end{lem}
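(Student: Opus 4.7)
The plan is to prove both inclusions by reducing everything to one simple computation: if $b \in \Soc(A)$, then conjugation in $A^\circ$ collapses to the adjoint action. Concretely, I would first show that for any $a \in A$ and any $b \in \Soc(A)$, one has
\[
a \circ b \circ a^{-1} = a \bullet b = a \cdot b + b,
\]
where $a^{-1}$ denotes the $\circ$-inverse. This uses $b \cdot a^{-1} = 0$ (so $b \circ a^{-1} = b + a^{-1}$), left distributivity, and the defining relation $a + a^{-1} + a \cdot a^{-1} = 0$ of the inverse. This identity is the engine of the whole lemma.

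Given this, the forward direction is immediate: if $B$ is a trivial ideal, then $B$ is an additive subgroup of $A$ contained in $\Soc(A)$ (by triviality), and since $B$ is in particular a left ideal, $a \cdot b \in B$, so $a \circ b \circ a^{-1} = a \cdot b + b \in B$. Thus $B$ is normal in $A^\circ$.

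For the reverse direction, I assume $B$ is an additive subgroup contained in $\Soc(A)$ and normal in $A^\circ$. The right ideal condition is free: for $b \in B \subseteq \Soc(A)$ and $a \in A$, we have $b \cdot a = 0 \in B$. For the left ideal condition, I use the key identity in reverse: normality gives $a \cdot b + b = a \circ b \circ a^{-1} \in B$, and subtracting $b \in B$ yields $a \cdot b \in B$. Hence $B$ is both a left and right ideal, and by hypothesis contained in $\Soc(A)$, so $B$ is a trivial ideal.

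I do not expect a serious obstacle; the only place that requires any care is the conjugation computation above, where one has to recognise that the $\Soc(A)$-hypothesis kills precisely the term that would otherwise prevent the collapse to $a \bullet b$. Once that identity is in hand, both directions are a single line each, with the right-ideal condition handed to us for free by the assumption $B \subseteq \Soc(A)$.
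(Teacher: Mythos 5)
Your proposal is correct and follows essentially the same route as the paper: both arguments get the right-ideal condition for free from $B\subseteq\Soc(A)$ and reduce normality in $A^{\circ}$ to invariance under the adjoint action, the only cosmetic difference being that you compute the conjugate ${}^{a\circ}b = a\bullet b$ directly (which is in fact the content of the paper's very next lemma) while the paper compares the cosets $a\circ B$ and $B\circ a$. The one point you leave implicit --- that an additive subgroup contained in $\Soc(A)$ is automatically a subgroup of $A^{\circ}$, since $+$ and $\circ$ coincide there --- is noted explicitly in the paper but is immediate.
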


\begin{proof}
First of all, it should be noted that the triviality of $\Soc(A)$ implies that any additive subgroup of $A$ contained in $\Soc(A)$ is also a subgroup of $A^{\circ}$ and vice versa.

Any additive subgroup $B$ contained in $\Soc(A)$ trivially fulfils $B\cdot a = \{ 0 \} \subseteq B$. On the other hand, we have the equivalences
\begin{align*}
a \circ B & = B \circ a  \\
\Leftrightarrow  a + a \bullet B & = B + \underbrace{B \bullet a}_{=a} \\
\Leftrightarrow a \bullet B & = B,
\end{align*}
which implies that $B$ is normal in $A^{\circ}$ iff $B$ is invariant under the adjoint action. We have already given an argument that the latter condition is equivalent to $B$ being a left ideal, i.e. an ideal.
\end{proof}

\begin{lem}
Let $A$ be a brace and $B \leq A$ a trivial subbrace. Then the adjoint action of $A$ on the elements of $B$ coincides with conjugation action of $A^{\circ}$ on $B^{\circ}$, i.e.:
\[
{}^{a\circ} b = a \bullet b.
\]
Especially, $B$ is in this case stable under the adjoint action.
\end{lem}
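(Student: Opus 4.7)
The plan is to verify the identity ${}^{a\circ} b = a \bullet b$ --- that is, $a \circ b \circ a^{(-1)} = a \bullet b$ where $a^{(-1)}$ denotes the inverse of $a$ in $A^{\circ}$ --- by a direct calculation in the additive group. The crucial input is that $B$ is contained in $\Soc(A)$, so $b \cdot x = 0$ for every $x \in A$, and the main tool is the cocycle identity from \autoref{prop:braces_are_1_cocycles}. Specialised to the identity cocycle it reads $g \circ h = g + g \bullet h$ for all $g, h \in A$, which is the bridge translating between the two group operations.

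I would first apply this cocycle identity with $g = a \circ b$ and $h = a^{(-1)}$ to obtain
\[
(a \circ b) \circ a^{(-1)} = (a \circ b) + (a \circ b) \bullet a^{(-1)}.
\]
Since $\bullet$ is a left action of $A^{\circ}$ on $A^{+}$, the second summand rewrites as $a \bullet (b \bullet a^{(-1)})$. The hypothesis $b \in \Soc(A)$ gives $b \bullet a^{(-1)} = b \cdot a^{(-1)} + a^{(-1)} = a^{(-1)}$, so this reduces to computing $a \bullet a^{(-1)}$. Applying the cocycle identity once more to the equation $a \circ a^{(-1)} = 0$ yields $0 = a + a \bullet a^{(-1)}$, hence $a \bullet a^{(-1)} = -a$. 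Plugging everything back and using the commutativity of $+$,
\[
(a \circ b) \circ a^{(-1)} = (a + b + a \cdot b) + (-a) = b + a \cdot b = a \bullet b,
\]
which is the desired identity.

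For the concluding ``especially'' assertion, since $B$ is a (trivial) ideal, the left-ideal property $a \cdot B \subseteq B$ gives $a \bullet B = a \cdot B + B \subseteq B$, so $B$ is indeed stable under the adjoint action. No step here is genuinely hard; the only point requiring a bit of care is unwinding the mixed $\circ/\bullet/+$ expression through the cocycle identity without confusing the three operations, and noticing that only the annihilation property $b \cdot a^{(-1)} = 0$ --- not any ideal property of $B$ --- is needed for the main identity.
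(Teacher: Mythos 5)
Your proof is correct. It follows the same basic strategy as the paper --- unwinding the conjugate ${}^{a\circ}b$ into additive terms via the definition of $\circ$ --- but organises the computation differently: the paper starts from the equation ${}^{a\circ}b \circ a = a \circ b$, expands both sides additively and cancels, whereas you expand $(a\circ b)\circ a^{(-1)}$ head-on through the cocycle identity and the left-action property of $\bullet$. The practical difference lies in which annihilation hypothesis is invoked: the paper's cancellation step uses ${}^{a\circ}b\cdot a=0$, i.e.\ that the \emph{conjugate} of $b$ again annihilates $A$, which tacitly rests on the normality of $B$ (or of $\Soc(A)$) in $A^{\circ}$ established in the preceding lemma; your computation only uses $b\cdot a^{(-1)}=0$ for the original element $b$ --- a point you correctly flag. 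This makes your argument marginally more self-contained at the cost of a few extra lines. Your handling of the ``especially'' clause via the left-ideal property $a\cdot B\subseteq B$ is also fine; alternatively it follows at once from the displayed identity combined with the normality of $B$ in $A^{\circ}$.
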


\begin{proof}
Clearly, ${}^{a\circ} b \circ a = a \circ b$. Writing the definition of $\circ$ out gives us
\begin{align*}
{}^{a\circ} b + a + \underbrace{{}^{a\circ} b \cdot a}_{=0} & = a + b + a \cdot b \\
\Leftrightarrow \quad {}^{a\circ} b & = b + a \cdot b \\
& = a \bullet b
\end{align*}
\end{proof}

\begin{defi} \label{def:semidirect_product_intern}
Let $A$ be a brace with a direct sum composition of the additive group into left ideals $A = B \oplus C$. If $C$ is a trivial ideal we call $A$ an (internal) \emph{semidirect product} of $B$ and $C$ and write $A = B \ltimes C$.
\end{defi}

As in group theory, it is  possible to \emph{externalize} semidirect products, i.e. to describe the interplay between the factors $B,C$ by making $C$ a $B$-module. This can essentially be regarded as a special semidirect products of braces (see \cite[p.480]{Rump_semidirect}):

\begin{defi}
Let $B$ be a brace and $C$ an abelian group. A binary operation
\begin{align*}
B \times C & \to C \\
(b,c) & \mapsto b \cdot c
\end{align*}
makes $C$ a \emph{$B$-module} if the following axioms hold:
\begin{align}
0 \cdot x & = 0 \tag{M1} \label{eq:brace_module_1} \\
a \cdot (x + y) & = a \cdot x + a \cdot y \tag{M2} \label{eq:brace_module_2} \\
(a + b + a \cdot b) \cdot x & = a \cdot x + b \cdot x + a \cdot (b \cdot x) \tag{M3} \label{eq:brace_module_3}
\end{align}
\end{defi}

\begin{prop} \label{prop:modules_are_circ_modules}
A $B$-module $C$ can be regarded as a $B^{\circ}$-module under the operation $a \bullet x = a \cdot x + x$.

On the other hand, any $B^{\circ}$-module $C$ can be made into a $B$-module by setting $b \cdot x = b \bullet x - x$.
\end{prop}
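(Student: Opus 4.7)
The plan is a direct axiom-by-axiom verification, using the identity $a \circ b = a + b + a \cdot b$ to translate between the $B$-module axioms (M1)--(M3) and the group-action axioms defining a $B^\circ$-module, followed by a check that the two constructions are mutually inverse substitutions.

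For the first direction, starting from a $B$-module $(C, \cdot)$ and defining $a \bullet x := a \cdot x + x$, I would verify that $\bullet$ defines a left action of $B^\circ$ on $C$ by additive automorphisms. The identity element of $B^\circ$ acts trivially by (M1), and additivity of each map $a \bullet (\cdot)$ comes immediately from (M2). The key compatibility $(a \circ b) \bullet x = a \bullet (b \bullet x)$ reduces, on expanding both sides --- using (M3) on the left and (M2) on the right --- to the same four-term expression $a \cdot (b \cdot x) + a \cdot x + b \cdot x + x$. Invertibility of each endomorphism $a \bullet (\cdot)$ then follows formally by applying this compatibility to the $\circ$-inverse of $a$, which is guaranteed to exist by \autoref{prop:group_under_circle}.

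In the reverse direction, starting from a $B^\circ$-module $(C, \bullet)$ and setting $b \cdot x := b \bullet x - x$, axioms (M1) and (M2) are essentially free: (M1) uses $0 \bullet x = x$, while (M2) uses the additivity of $a \bullet (\cdot)$, which is part of the module structure. The delicate step, and the one I expect to pose the main bookkeeping obstacle, is (M3). Here I would rewrite the left-hand side as $(a \circ b) \bullet x - x$ and apply the action property to obtain $a \bullet (b \bullet x) - x$. On the right, the composite $a \cdot (b \cdot x)$ expands to $a \bullet (b \bullet x - x) - (b \bullet x - x)$, after which linearity of $a \bullet$ splits $a \bullet (b \bullet x - x)$ as $a \bullet (b \bullet x) - a \bullet x$. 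Collecting terms, the three summands of the right-hand side telescope to $a \bullet (b \bullet x) - x$, matching the left.

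Finally, the substitutions $a \bullet x = a \cdot x + x$ and $a \cdot x = a \bullet x - x$ are visibly inverse to each other, so the two constructions define mutually inverse bijections between $B$-module structures and $B^\circ$-module structures on the underlying abelian group $C$. The main source of potential confusion to guard against is the overloading of the symbol $\cdot$, which denotes both the brace multiplication in $B$ (appearing, for instance, inside $a + b + a \cdot b$) and the newly-defined module action $B \times C \to C$; keeping these separate is what makes the verification of (M3) in each direction transparent.
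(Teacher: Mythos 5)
Your proposal is correct and follows essentially the same route as the paper: a direct translation of the module axioms through the substitution $a \bullet x = a \cdot x + x$, with the only nontrivial step being the verification of \eqref{eq:brace_module_3}, which you carry out by exactly the telescoping computation the paper uses (the paper merely presents it more tersely, deferring the first direction to the analogous calculation for the adjoint action and leaving \eqref{eq:brace_module_1}, \eqref{eq:brace_module_2} to the reader). Your additional remarks on invertibility via $\circ$-inverses and on the two substitutions being mutually inverse are sound and only make the argument more complete.
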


\begin{proof}
The proof of the first part is syntactically the same as the proof of \autoref{prop:group_under_circle}.

For the second statement we only prove the harder part, i.e. Eq. \eqref{eq:brace_module_3} which can be rewritten as
\begin{align*}
(a \circ b) \bullet x - x & = a \bullet x - x + b \bullet x - x + a \bullet (b \bullet x - x) - (b \bullet x - x) \\
\Leftrightarrow a \bullet (b \bullet x) - x & = a \bullet x - x + b \bullet x - x + a \bullet (b \bullet x) - a \bullet x - b \bullet x + x \quad \checkmark
\end{align*}
Checking the remaining axioms is left to the reader.
\end{proof}

\begin{defi} \label{def:semidirect_product}
Let $A$ be a brace and $B$ be an $A$-module. We then define the (external) semidirect product $A \ltimes B$ as follows:

Take for the additive group the group $A \oplus B$ and define a multiplication by
\[
(a_1,b_1)\cdot (a_2,b_2) := (a_1 \cdot a_2,a_1 \cdot b_2).
\]
\end{defi}

\begin{prop}
The construction of \autoref{def:semidirect_product} results in a brace.
\end{prop}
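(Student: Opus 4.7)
The plan is to verify axioms \eqref{eq:brace_axiom_1}, \eqref{eq:brace_axiom_2} and \eqref{eq:brace_axiom_3} for the candidate structure on $A \oplus B$ componentwise, exploiting the parallel between the brace axioms for $A$ and the module axioms for $B$: axiom \eqref{eq:brace_module_2} is the exact analogue of \eqref{eq:brace_axiom_1} and axiom \eqref{eq:brace_module_3} is the exact analogue of \eqref{eq:brace_axiom_2}. Since the proposed multiplication is defined separately on the two coordinates, each brace axiom will split into two identities, one of which is already known in $A$ and the other is the corresponding module identity.

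First I would verify \eqref{eq:brace_axiom_1}: expanding $(a_1,b_1)\cdot\bigl((a_2,b_2)+(a_3,b_3)\bigr)$ yields $\bigl(a_1\cdot(a_2+a_3),\,a_1\cdot(b_2+b_3)\bigr)$, and applying \eqref{eq:brace_axiom_1} in $A$ to the first entry and \eqref{eq:brace_module_2} to the second gives exactly the sum $(a_1,b_1)\cdot(a_2,b_2)+(a_1,b_1)\cdot(a_3,b_3)$. Next, for \eqref{eq:brace_axiom_2} I would compute both sides of the identity for triples $(a_i,b_i)$; on the first coordinate one recovers \eqref{eq:brace_axiom_2} in $A$, while on the second coordinate the left-hand side becomes $(a_1\cdot a_2+a_1+a_2)\cdot b_3$, which by \eqref{eq:brace_module_3} equals $a_1\cdot b_3+a_2\cdot b_3+a_1\cdot(a_2\cdot b_3)$, matching the expansion of the right-hand side.

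For \eqref{eq:brace_axiom_3} I would rewrite the map $(x,y)\mapsto (a_1,b_1)\cdot(x,y)+(x,y)$ as $(a_1\cdot x+x,\,a_1\cdot y+y)$. The first component is bijective in $x$ because $A$ is a brace, and the second is bijective in $y$ because by \autoref{prop:modules_are_circ_modules} the map $y\mapsto a_1\cdot y+y=a_1\bullet y$ is the action of an element of the group $A^{\circ}$ on $B$, hence a bijection. Thus the product map is a bijection on $A\oplus B$.

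No step poses a genuine obstacle; the verification is routine once one notices that the module axioms have been set up precisely so that the second coordinate mimics the behaviour of the first. The only place where one has to be mildly careful is in \eqref{eq:brace_axiom_2}, where the compound element $a_1\cdot a_2+a_1+a_2$ on the left-hand side must be fed into the module action, and it is exactly \eqref{eq:brace_module_3} that is tailored to deal with this.
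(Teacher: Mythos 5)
Your proposal is correct and follows essentially the same route as the paper: axioms \eqref{eq:brace_axiom_1} and \eqref{eq:brace_axiom_2} are checked coordinatewise, with the first coordinate handled by the brace axioms in $A$ and the second by \eqref{eq:brace_module_2} and \eqref{eq:brace_module_3}, and \eqref{eq:brace_axiom_3} follows from the factorization $(a_1,b_1)\bullet(a_2,b_2)=(a_1\bullet a_2,\,a_1\bullet b_2)$ into two bijections. You have merely written out the ``easy calculation'' that the paper leaves to the reader.
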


\begin{proof}
Eqs. \eqref{eq:brace_axiom_1}, \eqref{eq:brace_axiom_2} can be deduced from Eqs. \eqref{eq:brace_module_2}, \eqref{eq:brace_module_3} by an easy calculation.

On the other hand, \eqref{eq:brace_axiom_3} follows from the calculation
\[
(a_1,b_1) \bullet (a_2,b_2) = (a_1,b_1) \cdot (a_2,b_2) + (a_2,b_2) = (a_1 \bullet a_2, a_1 \bullet b_2)
\]
which proves the bijectivity of the maps $(a_2,b_2) \mapsto (a_1,b_1) \cdot (a_2,b_2) + (a_2,b_2)$.
\end{proof}

As in group theory, the internal and external notions of semidirect product coincide:

\begin{prop} \label{prop:internal_external}
If $A = B \ltimes C$ is an internal semidirect decomposition then there is a natural $B$-module structure on $C$ such that $B \ltimes C$ (externally) is isomorphic to $A$.

Vice versa, each external semidirect product $B \ltimes C$ naturally decomposes as an internal semidirect product.
\end{prop}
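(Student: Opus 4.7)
My plan for the first direction starts by defining the natural $B$-module structure on $C$: since both $B$ and $C$ are left ideals with $A^+ = B \oplus C$, the brace multiplication restricts to a well-defined map $B \times C \to C$, and I will check that the module axioms \eqref{eq:brace_module_1}--\eqref{eq:brace_module_3} are inherited from the brace axioms --- \eqref{eq:brace_module_1} from the identity $0 \cdot x = 0$ already noted in the proof of \autoref{prop:group_under_circle}, \eqref{eq:brace_module_2} directly from \eqref{eq:brace_axiom_1}, and \eqref{eq:brace_module_3} by restricting \eqref{eq:brace_axiom_2} to arguments in $B \times B \times C$.

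Next I will show that the obvious additive bijection $\phi: B \ltimes C \to A$, $(b,c) \mapsto b + c$, is a brace isomorphism. Additivity is clear from the direct-sum decomposition, so the issue is multiplicativity. Here the key step is the following identity: for any $c \in \Soc(A)$ and any $b, x \in A$,
\[
(c + b) \cdot x = b \cdot x,
\]
which I will extract from \eqref{eq:brace_axiom_2} by setting $a := c$ and noting that $c \cdot b$, $c \cdot (b \cdot x)$, and $c \cdot x$ all vanish. Applying this to $c_1 \in C \subseteq \Soc(A)$, $b_1 \in B$, $x := b_2 + c_2$, and then using \eqref{eq:brace_axiom_1}, one obtains
\[
(b_1 + c_1) \cdot (b_2 + c_2) = b_1 \cdot (b_2 + c_2) = b_1 \cdot b_2 + b_1 \cdot c_2,
\]
which matches $\phi\bigl((b_1,c_1) \cdot (b_2,c_2)\bigr) = \phi(b_1 \cdot b_2,\, b_1 \cdot c_2)$ from \autoref{def:semidirect_product}.

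For the converse direction, given an external semidirect product $A = B \ltimes C$, I will identify $B$ and $C$ with the additive subgroups $B \oplus \{0\}$ and $\{0\} \oplus C$ of $A^+$. Using the multiplication formula in \autoref{def:semidirect_product}, it is immediate that both are left ideals --- the product $(a,b) \cdot (b',0)$ lies in $B \oplus \{0\}$ and $(a,b) \cdot (0,c)$ lies in $\{0\} \oplus C$ --- while the computation $(0,c_1) \cdot (a,b) = (0,0)$, using \eqref{eq:brace_module_1} together with $0 \cdot a = 0$ in the brace $B$, shows that $\{0\} \oplus C \subseteq \Soc(A)$. Hence $A$ is also an internal semidirect decomposition in the sense of \autoref{def:semidirect_product_intern}, and the two module structures are readily seen to agree.

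The only non-routine point is the identity $(c + b) \cdot x = b \cdot x$ used in the second paragraph: because braces lack right distributivity, a naive term-by-term expansion of $(b_1 + c_1)(b_2 + c_2)$ is unavailable, and the containment $C \subseteq \Soc(A)$ must be used in this less obvious form rather than just to kill cross products. Everything else is bookkeeping.
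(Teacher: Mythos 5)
Your proposal is correct and follows essentially the same route as the paper: the identity $(c+b)\cdot x = b\cdot x$ for $c\in\Soc(A)$, extracted from \eqref{eq:brace_axiom_2} with all $c$-led products vanishing, is exactly the paper's step of inserting $c_1\cdot b_1=0$ and applying \eqref{eq:brace_axiom_2} to get $(b_1+c_1)\cdot(b_2+c_2)=b_1\cdot b_2+b_1\cdot c_2$, and your converse matches the paper's sketch. You have merely isolated that identity as a standalone lemma, which is a reasonable presentational choice.
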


\begin{proof}
If $A = B \ltimes C$ holds internally then $C$ is, by definition, stable under the adjoint action of $A$, especially it can be regarded - by restriction - as a $B$-module.

Now decompose arbitrary $a_1,a_2 \in A$ uniquely as $a_i = b_i + c_i$ ($i = 1,2$).

We then calculate
\begin{align*}
a_1 \cdot a_2 & = (b_1 + c_1) \cdot (b_2 + c_2) \\
& = (b_1 + c_1 + \underbrace{c_1 \cdot b_1}_{=0}) \cdot (b_2 + c_2) \\
& = b_1 \cdot (c_1 \cdot (b_2 + c_2)) + b_1 \cdot (b_2 + c_2) + c_1 \cdot (b_2 + c_2) \\
& = b_1 \cdot 0 + b_1 \cdot b_2 + b_1 \cdot c_2 + 0 \\
& = b_1 \cdot b_2 + b_1 \cdot c_2
\end{align*}
and this is the unique additive decomposition in $B \oplus C$.

For the other direction is very easy we just sketch the idea: identify $B$ with the elements of the form $(b,0)$ and $C$ with the elements $(0,c)$ in $B \ltimes C$. Then one can quickly calculate that $B,C$ are ideals and $C$ is trivial.
\end{proof}

\begin{defi}
Let $A$ be a brace and $\varphi:A^+ \to A^+$ an automorphism of the additive group. Then $A^{\varphi}$ is defined as the additive group of $A$ with the multiplication
\[
a \cdot_{\varphi} b := \varphi^{-1}(\varphi(a) \cdot \varphi(b))
\]
\end{defi}

Directly from this definition we get
\begin{cor} \label{cor:twisting_braces_by_auto}
\begin{enumerate}[a)]
\item $A^{\varphi}$ is a brace,
\item Let $A_1$,$A_2$ be two braces with the same underlying additive group and $\varphi:A \to A$ an additive automorphism. Then $\varphi: A_1 \to A_2$ is an isomorphism of braces iff $A_2^{\varphi} = A_1$,
\item if $\varphi_A:A^+ \to A^+$ and $\varphi_B:B^+ \to B^+$ are additive homomorphisms, we get
\[
(A \ltimes B)^{(\varphi_A,\varphi_B)} = A^{\varphi} \ltimes_{\varphi} B
\]
where the module structure on $B$ is altered to $a \cdot_{\varphi} b := \varphi_B^{-1}(\varphi_A(a)\cdot \varphi_B(b))$.
\end{enumerate}
\end{cor}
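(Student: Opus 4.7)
The plan is to exploit the fact that, by its very definition, the map $\varphi$ intertwines the two multiplications: rewriting the defining identity as $\varphi(a\cdot_\varphi b) = \varphi(a)\cdot\varphi(b)$, I see that $\varphi$ is automatically both an additive and a multiplicative bijection from $(A,+,\cdot_\varphi)$ onto $(A,+,\cdot)$. All three parts of the corollary are then essentially transport-of-structure statements, so I do not anticipate a genuine obstacle; the only question is how to organize the bookkeeping.

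For (a), I would verify the brace axioms \eqref{eq:brace_axiom_1}--\eqref{eq:brace_axiom_3} for $A^\varphi$ one at a time. In each case, applying $\varphi$ to both sides of the desired identity and using the intertwining together with the additivity of $\varphi$ converts the statement into the corresponding axiom for $A$, which holds by hypothesis; injectivity of $\varphi$ then finishes the step. For (b), part (a) guarantees that $A_2^\varphi$ is a brace sharing its additive group with $A_1$, and the intertwining property makes $\varphi\colon A_2^\varphi \to A_2$ a brace isomorphism tautologically. Consequently, $\varphi\colon A_1\to A_2$ is a brace isomorphism iff the identity map $A_1 \to A_2^\varphi$ is, and since both sides have the same additive group this is equivalent to the two multiplications coinciding, i.e.\ $A_1 = A_2^\varphi$.

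For (c), I would simply unwind both sides against the definitions. Writing out the left-hand multiplication gives
\[
(a_1,b_1)\cdot_{(\varphi_A,\varphi_B)}(a_2,b_2) = \bigl(\varphi_A^{-1}(\varphi_A(a_1)\cdot\varphi_A(a_2)),\;\varphi_B^{-1}(\varphi_A(a_1)\cdot\varphi_B(b_2))\bigr),
\]
whose first coordinate is $a_1\cdot_{\varphi_A}a_2$ by the definition of the twisted brace on $A$ and whose second coordinate is $a_1\cdot_\varphi b_2$ by the definition of the twisted module action on $B$; this is exactly the multiplication in $A^{\varphi_A}\ltimes_\varphi B$. The only side obligation is to confirm that $(B,\cdot_\varphi)$ is indeed a module over $A^{\varphi_A}$, which is another transport-of-structure argument applied to \eqref{eq:brace_module_1}--\eqref{eq:brace_module_3} and introduces nothing new beyond the ideas of (a). This is the most tedious step, but remains mechanical.
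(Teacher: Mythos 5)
Your proposal is correct and matches the paper's intent exactly: the paper offers no written proof, stating only that the corollary follows ``directly from this definition,'' and the transport-of-structure argument via the intertwining identity $\varphi(a\cdot_\varphi b)=\varphi(a)\cdot\varphi(b)$ is precisely the reasoning being left implicit. Your unwinding of the product in part (c) agrees with the stated twisted module action, so nothing further is needed.
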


For any brace - remember that our braces here are always assumed to be finite - there is a canonical decomposition (see \cite[Section 5]{Rump_braces_radical_rings})
\[
A = \bigoplus_{p}A_p
\]
of the additive group into $p$-primary components. Each $A_p$ is a left ideal (for the $p$-primary components are invariant under the adjoint action - in fact, they are invariant under each automorphism of $A$) and thus a subbrace.

Caution is demanded for the $A_p$ need not be right ideals!

Note that this decomposition also picks for each $p$ a special $p$-Sylow subgroup of $A^{\circ}$ - even if there is more than one for a fixed $p$.

In the following section this decomposition will turn out to be a \emph{semidirect} decomposition for braces of order $p^2q$ where $q > p+1$.

\section{Braces of order \texorpdfstring{$p^2q$}{p2q}: A general structure result} \label{sec:structure_result}

For $n \in \Z^+$ we denote by $b(n)$ the number of isomorphism classes of left braces of order $n$.

The aim of this article is a proof of the following conjectures of Guarnieri and Vendramin regarding some values of $b(n)$:

\begin{con}(\cite[Conjecture 6.2]{Vendramin_skew})
If $q > 3$ is prime then
\[
b(4q) =
\begin{cases}
11 & q \equiv 1 \mod 4 \\
9 & q \equiv 3 \mod 4
\end{cases}
\]
\end{con}

\begin{con}(\cite[Conjecture 6.3]{Vendramin_skew})
If $q > 3$ is prime then
\[
b(9q) =
\begin{cases}
14 & q \equiv 1 \mod 9 \\
4 & q \equiv 2,5,8 \mod 9 \\
11 & q \equiv 4,7 \mod 9
\end{cases}
\]
\end{con}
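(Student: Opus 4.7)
The plan is to specialize the general framework of \autoref{sec:generalities_braces} and \autoref{sec:structure_result} to the case $p=3$. By the structure result of \autoref{sec:structure_result}, every brace of order $9q$ with $q > 5$ prime decomposes as an internal semidirect product $A_3 \ltimes A_q$, where $A_3$ is a subbrace of order $9$ and $A_q$ is a trivial ideal of order $q$. Moreover, for a fixed $A_3$, the isomorphism classes of such semidirect products are in bijection with the orbits of $\Hom(A_3^\circ,\Z_q^\times)$ under the brace automorphism group $\Aut(A_3)$.

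First I would invoke Bachiller's classification \cite{Bachiller_classification} of braces of order $p^2$ to enumerate the four isomorphism classes of braces of order $9$: the two trivial braces on $\Z/9$ and on $(\Z/3)^2$, plus two non-trivial braces in which the additive and adjoint group types are swapped. For each of the four, I would determine the adjoint group $A_3^\circ$ and the image of $\Aut(A_3)$ inside $\Aut(A_3^\circ)$, since this image is precisely what acts on $\Hom(A_3^\circ,\Z_q^\times)$ via precomposition.

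Next I would split into three cases according to the residue of $q$ modulo $9$. If $q \equiv 2, 5, 8 \pmod 9$, then $\Z_q^\times$ contains no element of order $3$, so $\Hom(A_3^\circ, \Z_q^\times)$ is trivial regardless of $A_3$; each of the four $A_3$ contributes a single semidirect product (the direct product $A_3 \oplus A_q$), yielding $b(9q) = 4$. If $q \equiv 4, 7 \pmod 9$, the $3$-Sylow of $\Z_q^\times$ is cyclic of order $3$, so $\Hom(A_3^\circ, \Z_q^\times)$ has order $3$ or $9$ according to whether $A_3^\circ$ is cyclic or elementary abelian. If $q \equiv 1 \pmod 9$, the $3$-Sylow is cyclic of order $9$ and homomorphisms into this larger target must be counted.

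The main obstacle will be the orbit counting in the latter two cases. For the trivial brace on $(\Z/3)^2$, the brace automorphism group is the full $\Aut((\Z/3)^2)$, which acts by its standard representation and hence transitively on nonzero homomorphisms; this cleanly yields $2$ orbits whenever the target has an element of order $3$. The other three braces require more delicate bookkeeping: one must trace through how the relevant brace automorphism group---a subgroup of $(\Z/9)^\times$ for the braces with additive group $\Z/9$, and a proper subgroup of $\Aut((\Z/3)^2)$ for the swap brace whose adjoint group is $\Z/9$---acts on $\Hom(A_3^\circ,\Z_q^\times)$ via its image in $\Aut(A_3^\circ)$. Once the orbit counts are tabulated and summed over the four choices of $A_3$, one should obtain the conjectured totals of $11$ in the case $q \equiv 4, 7 \pmod 9$ and $14$ in the case $q \equiv 1 \pmod 9$. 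This case-by-case orbit analysis is the technical heart of the argument and fits naturally into the more general formula $b(p^2q) = p+8$ or $2p+8$ claimed in the introduction.
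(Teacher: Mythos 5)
Your overall strategy is exactly the paper's: decompose every brace of order $9q$ as $A_3\ltimes A_q$ via \autoref{thm:p2q_braces_are_semidirect}, run through Bachiller's four braces of order $9$, and count orbits of $\Hom(A_3^{\circ},\Z_q^{\times})$ under $\Aut(A_3)$ as in \autoref{cor:module_orbits_under_ap}, splitting on the residue of $q$ modulo $9$. The case $q\equiv 2,5,8 \bmod 9$ is handled correctly, and the reduction of the other two cases to orbit counting is the right technical heart. (Minor slip: the hypothesis should be $q>p+1=4$, i.e.\ every prime $q>3$ qualifies, not $q>5$; and for $q\equiv 1\bmod 9$ the relevant target is the subgroup of elements of order dividing $9$ in $\Z_q^{\times}$, which is cyclic of order $9$ even though the full $3$-Sylow may be larger.)

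There is, however, one genuine error in your structural setup: you assert that for the two non-trivial braces of order $9$ "the additive and adjoint group types are swapped," and you later refer to "the swap brace whose adjoint group is $\Z/9$" with additive group $(\Z/3)^2$. This swap phenomenon occurs only for $p=2$ (\autoref{lem:explicit_isomorphisms_2}); for odd $p$ it does not. \autoref{lem:explicit_isomorphisms_not_2} exhibits explicit isomorphisms $(B^{p^2})^{\circ}\cong\Z_{p^2}$ and $(B^{p,p})^{\circ}\cong\Z_p^2$ built from the correction term $C(x,2)$, which exists precisely because $2$ is invertible modulo an odd $p$; concretely, in $B^9$ one checks $1\circ 1\circ 1=3\neq 0$, so $(B^9)^{\circ}\cong\Z_9$, and in $B^{3,3}$ every element has order $3$ under $\circ$, so $(B^{3,3})^{\circ}\cong\Z_3^2$. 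If you carry out your orbit count with the adjoint groups swapped, the contributions of $B^{3,3}$ and $B^9$ come out wrong and the totals will not be $11$ and $14$. With the correct identifications, the counts for $q\equiv 4,7\bmod 9$ are $2$ (from $T^{3,3}$), $2$ (from $T^9$), $4$ (from $B^{3,3}$, whose automorphism group is the index-$p$ subgroup of upper-triangular matrices with $a=d^2$, producing the quadratic-residue distinction), and $3$ (from $B^9$, whose automorphism group $1+3\Z_9$ acts trivially on $\Hom(\Z_9,\Z_3)$), summing to $11$; for $q\equiv 1\bmod 9$ one gets $2+3+4+5=14$. So the blueprint is sound, but this identification must be corrected before the bookkeeping can succeed.
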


\begin{con}(\cite[Conjecture 6.4]{Vendramin_skew})
If $p,q$ are primes such that $p < q$ and $p \nmid q-1$ then $b(p^2q) = 4$.
\end{con}

These conjectures will be proved as special cases of a more general expression for $b(p^2q)$ where $p,q$ are primes such that $q > p+1$. See \autoref{cor:counting_p_not_2} and \autoref{cor:counting_p_2} for the results.

We cite, without proof, the following classification result of Bachiller (\cite[Proposition 2.4]{Bachiller_classification}). The following results of this article will strongly depend on it. Because it makes the formulation of the following results more comfortable, each of these braces will get an easily recognizable name:

\begin{thm}[Bachiller] \label{thm:bachiller_classification}
Up to isomorphism, the braces of order $n = p^2$ are given as follows:
\begin{enumerate}[a)]
\item The trivial brace $T^{p,p}$ associated with $\Z_p^2$,
\item the trivial brace $T^{p^2}$ associated with $\Z_{p^2}$,
\item $B^{p,p}:= \Z_p^2$, with the multiplication
\[
\begin{pmatrix}
x_1 \\ y_1
\end{pmatrix} \cdot 
\begin{pmatrix}
x_2 \\ y_2
\end{pmatrix} =
\begin{pmatrix}
y_1 y_2 \\ 0
\end{pmatrix}
\]
\item $B^{p^2}:= \Z_{p^2}$, with the multiplication
\[
x_1 \cdot x_2 = px_1x_2
\]
\end{enumerate}
\end{thm}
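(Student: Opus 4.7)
The plan is to exploit the reformulation of a brace on $A^+$ (supplied by \autoref{prop:braces_are_1_cocycles}) as the choice of a group structure $\circ$ on the underlying set $A$ together with a homomorphism $\rho: A^{\circ} \to \Aut(A^+)$ for which the identity map is a bijective $1$-cocycle. Concretely, one recovers the brace from its left-multiplication map $\lambda(a)(b) := a \cdot b$, an additive endomorphism of $A^+$, via $\rho(a) = \mathrm{id}_{A^+} + \lambda(a)$; the axioms \eqref{eq:brace_axiom_1}--\eqref{eq:brace_axiom_3} translate into $\rho(a) \in \Aut(A^+)$ together with $\rho$ being multiplicative with respect to the induced circle product $a \circ b = a + \lambda(a)(b)$.

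First I would split on the two isomorphism types of $A^+$, namely $\Z_p^2$ and $\Z_{p^2}$. Since $|A^{\circ}| = p^2$, the group $A^{\circ}$ is abelian and $\mathrm{Im}(\rho)$ is a $p$-subgroup of $\Aut(A^+)$. Both $\Aut(\Z_{p^2}) \cong \Z_p \times \Z_{p-1}$ and $\Aut(\Z_p^2) \cong GL_2(\mathbb{F}_p)$ admit cyclic Sylow $p$-subgroups of order $p$: in the first case $\mathrm{Im}(\rho)$ lies canonically in $1 + p\Z_{p^2}$, while in the second all subgroups of order $p$ are unipotent and hence conjugate to the upper-unitriangular one. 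Since conjugation by an element of $\Aut(A^+)$ induces a brace isomorphism by \autoref{cor:twisting_braces_by_auto}, I may always put $\mathrm{Im}(\rho)$ into this standard position.

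If $\mathrm{Im}(\rho)$ is trivial, then $\lambda \equiv 0$ and I recover the trivial braces $T^{p^2}$ and $T^{p,p}$. In the non-trivial case with $A^+ = \Z_{p^2}$, $\lambda(a)$ is multiplication by some $p\,m(a) \in p\Z_{p^2}$ for a function $m: A \to \mathbb{F}_p$; the socle $\ker\rho$ must be $p\Z_{p^2}$, and the identity $a \circ s = a + s$ for $s \in p\Z_{p^2}$ shows that $m$ is $p\Z_{p^2}$-periodic, hence factors through an additive character $\bar m: \Z_{p^2}/p\Z_{p^2} \to \mathbb{F}_p$. Non-triviality forces $\bar m$ to be an isomorphism, and a scalar rescaling via \autoref{cor:twisting_braces_by_auto} normalises it to the identity, recovering $a \cdot b = p a b$, i.e.\ $B^{p^2}$. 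In the non-trivial case with $A^+ = \Z_p^2$, the standardised $\lambda$ takes the form $\lambda(a)(b_1,b_2) = (m(a) b_2, 0)$ for some $m: A \to \mathbb{F}_p$; projecting the circle product onto the second coordinate gives $(a \circ b)_2 = a_2 + b_2$, so the cocycle relation $m(a \circ b) = m(a) + m(b)$ becomes a functional equation whose analysis yields $m(a) = \beta a_2$ with $\beta \in \mathbb{F}_p^{\times}$, and a further rescaling then produces $B^{p,p}$.

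The hard part will be the last functional-equation step: a priori $m$ is only a set-theoretic function and $\circ$ itself depends on $m$, so one must carefully show that the cocycle relation forces $m$ to be additive in the second coordinate and independent of the first. Once this is in place, the four candidate braces are separated as isomorphism classes by the pair (isomorphism type of $A^+$, vanishing of the multiplication), so no further collapsing is possible.
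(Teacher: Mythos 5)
The paper does not prove this theorem at all: it is quoted from Bachiller's article explicitly ``without proof'', so there is no internal argument to compare yours against. Your proposal is a correct, self-contained derivation in the same spirit as Bachiller's original one (encoding the brace by the map $\rho = \mathrm{id} + \lambda$ and exploiting the Sylow structure of $\Aut(A^+)$), and it would stand as an actual proof once two points are written out. First, one should record once that the three brace axioms are \emph{equivalent} to the conditions ``$\lambda(a)$ additive, $\rho(a)\in\Aut(A^+)$, and $\rho(a\circ b)=\rho(a)\rho(b)$'', so that the converse direction (your two normalized candidates really are braces) comes for free. Second, the functional-equation step you defer does close, and is short: after conjugating $\mathrm{Im}(\rho)$ into the unitriangular group, setting $b=0$ gives $m(0)=0$; restricting to $b_2=0$ shows $m(x,y)=\alpha x+g(y)$ with $g(0)=0$ and $\alpha\in\Z_p$; substituting back turns the relation $m(a\circ b)=m(a)+m(b)$ into $g(a_2+b_2)-g(a_2)-g(b_2)=-\alpha\bigl(\alpha a_1+g(a_2)\bigr)b_2$, and since the left-hand side does not involve $a_1$ the coefficient $\alpha^2$ must vanish, forcing $\alpha=0$ and $g$ additive, i.e.\ $m(a)=\beta a_2$; the rescaling by $\mathrm{diag}(\beta,1)$ then lands on $B^{p,p}$. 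The remaining assertions (that $\mathrm{Im}(\rho)$ is a $p$-group of order at most $p$, that every order-$p$ subgroup of $GL_2(\Z_p)$ is unipotent hence conjugate to the unitriangular one, that $\ker\rho=\Soc(A)$ is the unique additive subgroup $p\Z_{p^2}$ in the cyclic case, and that the four resulting braces are distinguished by the isomorphism type of $A^+$ together with triviality of the multiplication) are all correct, including the degenerate case $p=2$ where $\Aut(\Z_4)=1+2\Z_4$.
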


For our classification we need to determine the automorphism groups of the braces listed above:

\begin{lem} \label{lem:automorphism_groups_p2}
The automorphism groups of the braces of order $p^2$ are given by:
\begin{enumerate}[a)]
\item $\Aut(T^{p,p}) = GL_2(\Z_p)$,
\item $\Aut(T^{p^2}) = \Z_{p^2}^{\times}$,
\item $\Aut(B^{p,p}) = \left\{ 
\begin{pmatrix}
a & b \\ 0 & d
\end{pmatrix} \vline a = d^2
\right\} \leq GL_2(\Z_p)$
\item $\Aut(B^{p^2}) = 1+p\Z_{p^2} \leq \Z_{p^2}^{\times}$.
\end{enumerate}
\end{lem}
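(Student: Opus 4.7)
\medskip

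The plan is to treat each of the four braces separately, in each case starting from the fact that a brace automorphism is first of all an additive automorphism, and then using the specific form of the multiplication to cut out those additive automorphisms which also respect it.

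For the trivial braces $T^{p,p}$ and $T^{p^2}$, every additive automorphism trivially preserves the (zero) multiplication, so the brace automorphism groups coincide with the additive automorphism groups, giving $GL_2(\Z_p)$ and $\Z_{p^2}^{\times}$ respectively. These two cases are essentially immediate.

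For $B^{p,p}$, I would write a candidate automorphism as a matrix $M = \begin{pmatrix} a & b \\ c & d \end{pmatrix} \in GL_2(\Z_p)$ acting on column vectors, and expand both sides of $M(v_1 \cdot v_2) = (Mv_1) \cdot (Mv_2)$ for generic $v_i = (x_i,y_i)^T$. Using the explicit formula, the left hand side equals $(ay_1y_2, cy_1y_2)^T$ while the right hand side is $((cx_1+dy_1)(cx_2+dy_2),0)^T$. Comparing the second coordinate forces $c = 0$, and then comparing the first coordinate forces $a = d^2$. Invertibility is then automatic since $\det M = d^3$ with $d \in \Z_p^{\times}$. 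Conversely, every such matrix visibly preserves the multiplication. This yields the upper-triangular description in the statement.

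For $B^{p^2}$, any additive automorphism of $\Z_{p^2}$ has the form $\varphi(x) = ux$ with $u \in \Z_{p^2}^{\times}$. Plugging into the multiplication $x_1 \cdot x_2 = px_1x_2$, the condition $\varphi(x_1 \cdot x_2) = \varphi(x_1)\cdot\varphi(x_2)$ becomes $pux_1x_2 \equiv pu^2 x_1 x_2 \pmod{p^2}$ for all $x_1,x_2$, i.e. $pu(u-1) \equiv 0 \pmod{p^2}$, i.e. $u \equiv 1 \pmod p$ (since $u$ is a unit). This is exactly the subgroup $1 + p\Z_{p^2}$, and each such $u$ evidently gives a brace automorphism.

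I do not expect any real obstacle: every step is a direct expansion of the defining multiplication against a generic additive automorphism. The only minor subtlety is checking invertibility in case (c), which reduces to noting $\det = d^3$, and in case (d) observing that the condition $u(u-1) \equiv 0 \pmod p$ together with $u \in \Z_{p^2}^{\times}$ rules out $u \equiv 0 \pmod p$.
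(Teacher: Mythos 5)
Your proposal is correct and follows essentially the same route as the paper: the trivial braces are dispatched immediately, $B^{p,p}$ is handled by expanding the automorphism condition for a generic matrix to force $c=0$ and then $a=d^2$, and $B^{p^2}$ by reducing the condition on a unit $u$ to $u \equiv 1 \pmod p$. The extra remarks on invertibility ($\det = d^3$) and on the converse inclusions are harmless additions that the paper leaves implicit.
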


\begin{proof}
Only for $B^{p,p}$ and $B^{p^2}$, some reasoning is necessary:

Clearly, any automorphism of $B^{p,p}$ is given by a left-multiplication by an invertible matrix $\begin{pmatrix}
a & b \\ c & d
\end{pmatrix}$, i.e.
\[
\varphi \begin{pmatrix}
x \\ y
\end{pmatrix} = \begin{pmatrix}
a & b \\ c & d
\end{pmatrix} \begin{pmatrix}
x \\ y
\end{pmatrix}
\]
Writing out the equation $\varphi\begin{pmatrix}
x_1 \\ y_1
\end{pmatrix} \cdot \varphi \begin{pmatrix}
x_2 \\ y_2
\end{pmatrix} = \varphi \left( 
\begin{pmatrix}
x_1 \\ y_1
\end{pmatrix} \cdot
\begin{pmatrix}
x_2 \\ y_2
\end{pmatrix}
\right)$ gives us to the two equations
\begin{align}
(cx_1 + dy_1)(cx_2 + dy_2) & = ay_1y_2 \label{eq:auto1} \\
cy_1 y_2 & = 0 \label{eq:auto2}
\end{align}
Using Eq. \eqref{eq:auto2} results in $c= 0$. Inserting this result in Eq. \eqref{eq:auto1} finally gives $a = d^2$.

For $B^{p^2}$ each automorphism is given by right-multiplication by a unit $a \in \Z_{p^2}^{\times}$. Here the automorphism property reads
\[
p(ax_1)(ax_2) = apx_1x_2 \Leftrightarrow apx_1x_2 = px_1x_2,
\]
i.e. $a$ must fix the subgroup $p\Z_{p^2}$. This is the case exactly when $a \in 1+p\Z_{p^2}$.
\end{proof}

Now we will point out some general lemmata concerning the structure of some braces of order $p^2q$:

\begin{lem} \label{lem:aq_is_trivial}
Let $A$ be a brace of order $p^2q$ where $q > p+1$. If $A_q$ denotes the $q$-component of $A_q^+$ then $A_q$ becomes a trivial subbrace by restriction.

Furthermore, $A_q^{\circ}$ is the unique $q$-Sylow subgroup of $A^{\circ}$.
\end{lem}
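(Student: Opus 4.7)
The plan is in two steps: trivialise the brace structure on $A_q$ via a coprimality argument on the adjoint action, then deduce normality of $A_q^\circ$ in $A^\circ$ from the earlier lemma that identifies the adjoint action on a trivial left ideal with conjugation.

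First I would note that the $q$-primary component $A_q$ of $A^+$ is a characteristic subgroup of the additive group, hence invariant under $\Aut(A^+)$, and in particular under the adjoint action of $A^\circ$. By the characterisation of left ideals recalled in \autoref{sec:generalities_braces}, this makes $A_q$ a left ideal, and so a subbrace of order $q$. Its circle operation is the restriction of $\circ$, whence $A_q^\circ \leq A^\circ$ is a subgroup of order $q$ and $A_q^+ \cong \Z_q$.

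For the triviality, the adjoint action of $A_q^\circ$ on its own additive group provides a group homomorphism $A_q^\circ \to \Aut(A_q^+) \cong \Z_q^\times$. Its source has order $q$ while its target has order $q-1$, which are coprime, so the homomorphism is trivial. Hence $a \bullet b = b$, equivalently $a \cdot b = a \bullet b - b = 0$, for all $a,b \in A_q$; thus $A_q$ is a trivial subbrace.

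For the Sylow statement, the preceding lemma on trivial subbraces applies: since $A_q$ is a trivial left ideal, the adjoint action of $A^\circ$ on $A_q$ coincides with conjugation in $A^\circ$. As $A_q$ is stable under the adjoint action (being a left ideal), $A_q^\circ$ is stable under conjugation, i.e.\ normal in $A^\circ$. A normal subgroup of order $q$ in a group of order $p^2 q$ is necessarily the unique $q$-Sylow.

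The only real content is the observation $\gcd(q,q-1)=1$, and I anticipate no obstacle. I would also remark that the hypothesis $q > p+1$ is not actually used in this particular argument --- only the fact that $p,q$ are distinct primes --- so it must be the subsequent structural results on $A_p$ and the module structure on $A_q$ where this stronger inequality starts to bite.
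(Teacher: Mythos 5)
Your first two steps are fine as far as they go: the coprimality of $q$ and $q-1$ does show that the multiplication vanishes on $A_q \times A_q$. But this is strictly weaker than what the lemma needs, and the gap surfaces in your third paragraph. The conjugation lemma you invoke is proved in the paper via the cancellation ${}^{a\circ}b \cdot a = 0$ for \emph{arbitrary} $a \in A$; it therefore requires $A_q \subseteq \Soc(A)$, i.e. $b \cdot a = 0$ for all $b \in A_q$ and all $a \in A$, including $a \in A_p$. You have only shown that elements of $A_q$ annihilate each other, not that they annihilate $A_p$, so the hypothesis of that lemma is not verified. The same deficiency propagates downstream: the internal semidirect product of \autoref{def:semidirect_product_intern} requires $A_q$ to be a trivial \emph{ideal}, i.e. contained in $\Soc(A)$, so the weak form of triviality you establish would not suffice for \autoref{thm:p2q_braces_are_semidirect} either.

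The gap is not cosmetic, and your closing remark that $q > p+1$ is unused is the tell. For $(p,q) = (2,3)$ there is a brace of order $12$ with additive group $\Z_2^2 \oplus \Z_3$ and adjoint group $A_4$ (take the regular subgroup of the holomorph generated by the $\Z_2^2$-translations together with the element combining translation by $1 \in \Z_3$ with an order-$3$ automorphism of $\Z_2^2$). In that brace your coprimality argument still gives $a \cdot b = 0$ for $a,b \in A_3$, yet the $3$-Sylow subgroups of $A_4$ are not normal, so the conclusion of the lemma fails; here $A_3$ does not annihilate $A_2$. The paper's proof avoids this by bounding $\vert\Aut(A^+)\vert$ by a divisor of $(q-1)(p+1)p(p-1)^2$ --- this is exactly where $q > p+1$ bites --- to conclude that a $q$-Sylow subgroup of $A^{\circ}$ lies in the kernel of the adjoint representation $A^{\circ} \to \Aut(A^+)$, hence in $\Soc(A)$, and only then identifies it with $A_q^+$ and deduces normality. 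To repair your argument, replace your homomorphism $A_q^{\circ} \to \Aut(A_q^+)$ by $A_q^{\circ} \to \Aut(A^+)$ and check that the order of the target is prime to $q$, which is the one place the hypothesis on $q$ is genuinely needed.
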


\begin{proof}
We will make use of the adjoint operation of $A^{\circ}$ on $A^+$:

Clearly, $A^+ = A_p^+ \oplus A_q^+$, and it follows that
\[
\Aut(A^+) \cong \Aut(A^+_p) \times \Aut(A^+_q).
\]
$A_q^+$ must be isomorphic to $\Z_q$ which implies $\vert \Aut(A^+_q)\vert = q-1$.

Furthermore, $A_p^+$ is isomorphic to either $\Z_{p^2}$ or $\Z_p^2$. In the first case holds $\vert \Aut(A_p^+) \vert = p(p-1)$, whereas $\vert \Aut(A_p^+) \vert = (p^2 -1)(p^2 -p) = (p+1)p(p-1)^2$ holds in the second case.

We conclude that in each case we have
\[
\vert \Aut(A^+) \vert \quad \mid \quad (q-1)(p+1)p(p-1)^2.
\]
Any prime factor of $\vert \Aut(A^+) \vert$ is therefore $< q$ such that $\Aut(A^+)$ can not have a subgroup of order $q$.

Letting $A_q^{\circ}$ be any $q$-Sylow subgroup of $A^{\circ}$, it follows that $A_q^{\circ}$ acts trivially under the adjoint action, i.e. is contained in $\Soc(A)$.

This makes $A_q^{\circ}$ also a subgroup of $A^+$ of order $q$ for the additive and multiplicative structures coincide inside $\Soc(A)$.

Therefore, $A_q^{\circ} = A_q^+$, so $A_q^{\circ}$ is the only $q$-Sylow subgroup of $A^{\circ}$ which shows $A_q^{\circ} \unlhd A^{\circ}$.

$A_q^+$ is therefore a trivial subbrace of $A$.
\end{proof}

\begin{rema}
It should be noted that the proof of \autoref{lem:aq_is_trivial} implies that the condition $q > p+1$ actually is way to strong - if suffices to assume that $q$ divides neither of the three integers $p-1,p,p+1$ for the classification theorem to hold.

We just decided to use the stronger condition $q > p+1$ in our statements in order not to deviate too much from the original statements of Guarnieri's and Vendramin's conjectures.
\end{rema}

We can now deduce the following structure theorem:

\begin{thm} \label{thm:p2q_braces_are_semidirect}
Any brace $A$ of order $n = p^2q$ ($q > p+1 $) is a semidirect product $A_p \ltimes A_q$ in the sense of \autoref{def:semidirect_product}.
\end{thm}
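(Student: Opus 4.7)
The statement follows almost immediately by assembling pieces already in place, so the plan is short: recognize that \autoref{lem:aq_is_trivial} together with the remarks made just before \autoref{sec:structure_result} give both the additive decomposition into left ideals and the triviality condition required by \autoref{def:semidirect_product_intern}.

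First, I would appeal to the primary decomposition of the finite abelian group $A^+$, which yields $A^+ = A_p^+ \oplus A_q^+$. As noted in the paragraph preceding the section, the primary components of the additive group are characteristic in $A^+$, hence invariant under the homomorphism $\rho : A^\circ \to \Aut(A^+)$ coming from the adjoint action; therefore $A_p$ and $A_q$ are both left ideals of $A$. This already gives a direct sum decomposition into left ideals, which is the first requirement of \autoref{def:semidirect_product_intern}.

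Next, I would invoke \autoref{lem:aq_is_trivial}: under the hypothesis $q > p+1$, the subbrace $A_q$ is trivial (so $A_q \subseteq \Soc(A)$) and the subgroup $A_q^\circ$ is normal in $A^\circ$. By the characterization lemma immediately following the definition of trivial ideal, an additive subgroup contained in $\Soc(A)$ which is normal in $A^\circ$ is precisely a trivial ideal. Hence $A_q$ is a trivial ideal of $A$, which is the second requirement of \autoref{def:semidirect_product_intern}.

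Combining the two observations, $A = A_p \oplus A_q$ is a direct sum of left ideals with $A_q$ trivial, so by definition this is an internal semidirect product $A = A_p \ltimes A_q$. There is essentially no obstacle here: all the substantive work (controlling the order of $\Aut(A^+)$ to force $A_q^\circ$ into $\Soc(A)$) has been carried out inside \autoref{lem:aq_is_trivial}; the present theorem is a packaging statement.
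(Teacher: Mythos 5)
Your proposal is correct and follows essentially the same route as the paper: the canonical primary decomposition into left ideals plus \autoref{lem:aq_is_trivial} to see that $A_q$ is a trivial ideal, giving the internal semidirect product. The only (negligible) omission is the final appeal to \autoref{prop:internal_external} to pass from the internal decomposition to the external semidirect product of \autoref{def:semidirect_product}, which is what the theorem literally asserts.
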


\begin{proof}
We have the canonical decomposition $A = A_p \oplus A_q$ into left ideals which has been introduced at the end of \autoref{sec:generalities_braces}. \autoref{lem:aq_is_trivial} shows that this decomposition is an internal semidirect product, i.e. $A = A_p \ltimes A_q$. The theorem then follows from \autoref{prop:internal_external}.
\end{proof}

This gives us an implicit instruction how to proceed: we need to describe \emph{all} possible ways of making $\Z_q$ into an $A_p$-module where $A_p$ is a brace of order $p^2$.

By \autoref{prop:modules_are_circ_modules} this is equivalent to giving suitable group homomorphisms $A_p^{\circ} \to \Aut(\Z_q)$.

\begin{lem}
Let $A$  be a brace of order $p^2q$ ($q > p + 1$), and $A = A_p \oplus A_q$ its decomposition into primary components. Furthermore, let $(\varphi_p,\varphi_q):A_p \oplus A_q \to A_p \oplus A_q$ be an (additive) automorphism.

Then the braces $A^{(\varphi_p,\varphi_q)}, A^{(\varphi_p,id_{A_q})}$ are identical.
\end{lem}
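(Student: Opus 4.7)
The plan is to reduce the claim to a single equality for the mixed module multiplication, and then to exploit that $A_q^+ \cong \Z_q$ is cyclic of prime order, so any additive automorphism $\varphi_q$ is simply multiplication by a scalar.

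First, I note that $A^{(\varphi_p,\varphi_q)}$ and $A^{(\varphi_p,\mathrm{id}_{A_q})}$ share the same underlying abelian group $A^+$ by construction, so it suffices to prove that their multiplications agree. Using \autoref{thm:p2q_braces_are_semidirect}, we may write $A = A_p \ltimes A_q$; by \autoref{cor:twisting_braces_by_auto}\,c), both twisted braces then decompose as $A_p^{\varphi_p} \ltimes_\varphi A_q$. The multiplication restricted to $A_p \times A_p$ is that of $A_p^{\varphi_p}$ in either case, and any product involving a factor from $A_q$ on the left vanishes on both sides, since $A_q$ is a trivial subbrace by \autoref{lem:aq_is_trivial}. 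The only ingredient that could differ is therefore the mixed module action $A_p \times A_q \to A_q$.

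Writing out this action, in $A^{(\varphi_p,\varphi_q)}$ one has
\[
a \cdot_{(\varphi_p,\varphi_q)} b \;=\; \varphi_q^{-1}\bigl(\varphi_p(a)\cdot \varphi_q(b)\bigr),
\]
whereas in $A^{(\varphi_p,\mathrm{id}_{A_q})}$ the action is simply $\varphi_p(a)\cdot b$. Hence the claim reduces to the identity
\[
\varphi_q^{-1}\bigl(\varphi_p(a)\cdot \varphi_q(b)\bigr) \;=\; \varphi_p(a)\cdot b \qquad (a\in A_p,\ b\in A_q).
\]
Since $A_q^+ \cong \Z_q$, the automorphism $\varphi_q$ is multiplication by some $k \in \Z_q^\times$. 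The module axiom \eqref{eq:brace_module_2} implies that the action of any $c \in A_p$ on $A_q$ is $\Z$-linear in its second argument, so $c\cdot(kb) = k(c\cdot b)$. Setting $c=\varphi_p(a)$ gives $\varphi_p(a)\cdot\varphi_q(b) = \varphi_q(\varphi_p(a)\cdot b)$, and applying $\varphi_q^{-1}$ yields the desired equality.

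I do not foresee a real obstacle: the conceptual point is simply that additive automorphisms of the one-dimensional $\Z_q$-module $A_q$ are scalars and therefore commute with any $\Z$-linear left action. Everything else is routine bookkeeping with the semidirect-product decomposition and the twist formula of \autoref{cor:twisting_braces_by_auto}.
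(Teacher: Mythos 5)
Your proof is correct and follows essentially the same route as the paper: both reduce the claim to the mixed module action $A_p\times A_q\to A_q$, represent $\varphi_q$ as multiplication by a scalar $k\in\Z_q^{\times}$, and use the left-distributivity of the action (axiom \eqref{eq:brace_module_2}) to commute the scalar past the action so that $\varphi_q^{-1}$ and $\varphi_q$ cancel. The extra bookkeeping you include about the other components of the product is harmless and only makes explicit what the paper leaves implicit.
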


\begin{proof}
$\varphi_q$ and $\varphi_q^{-1}$ can be represented as multiplications by constants $k,l \in \Z^+$ where $kl \equiv 1 \mod q$.

If we denote by $\cdot_{\varphi}$ the multiplication in $A^{\varphi}$, then in the semidirect product decomposition $A_p \ltimes A_q$, the module multiplication is altered by $\varphi = (\varphi_p,\varphi_q)$ as follows:
\begin{align*}
a_p \cdot_{\varphi}a_q  & = \varphi_q^{-1}(\varphi_p(a_p) \cdot \varphi_q(a_q)) \\
& = l(\varphi_p(a_p)\cdot (ka_q)) \\
& = lk \varphi_p(a_p) \cdot a_q = \varphi_p(a_p) \cdot a_q
\end{align*}
which is the same module structure as in $A^{(\varphi_p,id_{A_q})}$.
\end{proof}

\begin{cor} \label{cor:module_orbits_under_ap}
Let $A$ be an additive group of order $p^2q$ ($q > p+1$).

If two brace multiplications $\cdot_1,\cdot_2$ are defined on $A$ which coincide on $A_p$ then $\cdot_1,\cdot_2$ define isomorphic brace structures on $A$ iff there is a brace automorphism $\varphi: A_p \to A_p$ such that
\[
\varphi(a) \cdot_2 b = a \cdot_1 b.
\] 
\end{cor}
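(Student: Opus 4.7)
The strategy is to combine \autoref{cor:twisting_braces_by_auto} with the immediately preceding lemma in order to reduce the question to comparing module structures on $A_q$. By \autoref{thm:p2q_braces_are_semidirect} both braces $(A, \cdot_1)$ and $(A, \cdot_2)$ decompose as semidirect products $A_p \ltimes A_q$, and since the two multiplications agree on $A_p$, the $A_p$-factor is literally the same brace on both sides; only the $A_p$-module structures on $A_q$ can possibly differ. I would also use that, as noted at the end of \autoref{sec:generalities_braces}, the primary components $A_p$, $A_q$ are stable under every additive automorphism of $A$, so any additive automorphism $\psi$ of $A$ has block form $(\psi_p, \psi_q)$ with $\psi_p \in \Aut(A_p^+)$ and $\psi_q \in \Aut(A_q^+)$.

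For the direction ``$\Leftarrow$'', given $\varphi$ as in the statement, I would set $\tilde\varphi := (\varphi, \text{id}_{A_q})$ and verify directly from the multiplication formula of \autoref{def:semidirect_product} that $\tilde\varphi : (A, \cdot_1) \to (A, \cdot_2)$ is a brace isomorphism. The $A_p$-coordinate is handled by $\varphi$ being a brace automorphism of $A_p$ (together with $\cdot_1 = \cdot_2$ on $A_p$), and the $A_q$-coordinate is precisely the hypothesized identity $\varphi(a) \cdot_2 b = a \cdot_1 b$.

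For the direction ``$\Rightarrow$'', let $\psi = (\psi_p, \psi_q)$ be a brace isomorphism $(A, \cdot_1) \to (A, \cdot_2)$. \autoref{cor:twisting_braces_by_auto}(b,c) gives
\[
A_p^{\psi_p} \ltimes_{\psi} A_q \;=\; (A, \cdot_2)^{\psi} \;=\; (A, \cdot_1) \;=\; A_p \ltimes A_q,
\]
where the twisted module action reads $a \cdot_\psi b = \psi_q^{-1}(\psi_p(a) \cdot_2 \psi_q(b))$. The preceding lemma now lets me absorb $\psi_q$: since $\psi_q$ is multiplication by a unit modulo $q$, it commutes past the module action and cancels with $\psi_q^{-1}$, yielding $a \cdot_\psi b = \psi_p(a) \cdot_2 b$. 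Comparing $A_p$-components of the displayed identity shows that $\psi_p$ is a brace automorphism of $A_p$, and comparing module actions shows $a \cdot_1 b = \psi_p(a) \cdot_2 b$; so $\varphi := \psi_p$ is the sought automorphism.

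The only mildly delicate step is the bookkeeping in the ``$\Rightarrow$'' direction: one has to invoke \autoref{cor:twisting_braces_by_auto}(c) on the semidirect decomposition and then apply the preceding lemma to kill the $\psi_q$-twist, after which the equivalence of the two module structures up to the twist by $\psi_p$ is immediate.
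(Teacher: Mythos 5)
Your proof is correct and follows essentially the same route the paper intends: the corollary is stated there without proof as an immediate consequence of \autoref{cor:twisting_braces_by_auto} and the preceding lemma, and your argument fills in exactly that reasoning (splitting an isomorphism into its primary components, twisting via \autoref{cor:twisting_braces_by_auto}(b,c), and using the lemma to absorb the $\psi_q$-factor). The verification in both directions is sound; no gaps.
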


Therefore, in order to classify the possible semidirect products $A_p \ltimes A_q$ up to isomorphism, it suffices to determine the orbits of the action of $\Aut(A_p)$ on $\Hom(A_p^{\circ},A_q)$ given by
\begin{align*}
\Hom(A_p^{\circ},A_q) \times \Aut(A_p) & \to \Hom(A_p^{\circ},A_q) \\
(f,\varphi) & \mapsto f \circ \varphi.
\end{align*}

This result will be put to use in the following sections.

\section{The case \texorpdfstring{$p \neq 2$}{p != 2}} \label{sec:case_p_not_2}

\subsection{Preparations}

To avoid confusion with vectors, we will follow Bachiller's convention by writing the binomial coefficient \glqq $n$ over $k$\grqq\ as $C(n,k)$.

Furthermore, we will repeatedly use the easy to prove identity
\begin{equation} \label{eq:chu_vandermonde}
C(x+y,2) = C(x,2) + C(y,2) + xy.
\end{equation}

Our first step will consist giving an isomorphism between the adjoint groups of $B^{p^2}$, $B^{p,p}$ and the groups $\Z_{p^2}$ resp. $\Z_p^2$:

\begin{lem} \label{lem:explicit_isomorphisms_not_2}
\begin{enumerate}[a)]
\item The map
\begin{align*}
\gamma: B^{p^2} & \to \Z_{p^2} \\
x & \mapsto x - p C(x,2)
\end{align*}
is an isomorphism between the adjoint group $(B^{p^2})^{\circ}$ and $\Z_{p^2}$.
\item The map
\begin{align*}
\delta: B^{p,p} & \to \Z_p^2 \\
\begin{pmatrix}
x \\ y
\end{pmatrix} & \mapsto \begin{pmatrix}
x - C(y,2) \\ y
\end{pmatrix}
\end{align*}
is an isomorphism between the adjoint group $(B^{p,p})^{\circ}$ and $\Z_p^2$.
\end{enumerate}
\end{lem}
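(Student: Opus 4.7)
The plan is straightforward for both parts: write out $\gamma(x_1 \circ x_2)$ resp.\ $\delta(a_1 \circ a_2)$, expand via Eq.~\eqref{eq:chu_vandermonde}, and check the result equals the sum in $\Z_{p^2}$ resp.\ $\Z_p^2$. Bijectivity is then immediate from a size count plus an easy injectivity observation.

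For part b), recall that in $B^{p,p}$ one computes
\[
\begin{pmatrix} x_1 \\ y_1 \end{pmatrix} \circ \begin{pmatrix} x_2 \\ y_2 \end{pmatrix} = \begin{pmatrix} x_1 + x_2 + y_1 y_2 \\ y_1 + y_2 \end{pmatrix}.
\]
Applying $\delta$ to the right-hand side, the first coordinate becomes $x_1 + x_2 + y_1 y_2 - C(y_1+y_2,2)$, and using \eqref{eq:chu_vandermonde} with $x = y_1$, $y = y_2$ this collapses to $x_1 - C(y_1,2) + x_2 - C(y_2,2)$, which is exactly the first coordinate of $\delta(a_1) + \delta(a_2)$. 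The second coordinate matches trivially. Bijectivity follows since $\delta$ fixes the second component and, for each fixed $y$, the assignment $x \mapsto x - C(y,2)$ is a bijection of $\Z_p$.

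For part a), the computation is analogous but one must keep careful track of powers of $p$. Applying Chu-Vandermonde to split $C(x_1+x_2+px_1x_2,2)$ first as $C(x_1,2) + C(x_2 + px_1x_2,2) + x_1(x_2+px_1x_2)$, then splitting $C(x_2+px_1x_2,2)$ again, produces a sum of terms. Every term that contains the factor $px_1x_2$ coming from the "mixed" part will be multiplied by the outer $p$ from the definition of $\gamma$, and hence vanishes in $\Z_{p^2}$ -- \emph{except} for the term $p \cdot x_1 x_2$, which precisely cancels the $p x_1 x_2$ coming from $x_1 \circ x_2$. What remains is $x_1 + x_2 - pC(x_1,2) - pC(x_2,2) = \gamma(x_1) + \gamma(x_2)$.

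The main (mild) obstacle is the bookkeeping in this last step: one has to verify that $pC(px_1x_2, 2)$ indeed vanishes mod $p^2$, which requires $2$ to be invertible in $\Z_{p^2}$ and thus uses the hypothesis $p \neq 2$ of this section. Bijectivity of $\gamma$ is clear from $\gamma(x) \equiv x \pmod{p}$: if $\gamma(x) = 0$ then $x \in p\Z_{p^2}$, and on $p\Z_{p^2}$ the correction term $pC(x,2)$ again vanishes mod $p^2$ for $p$ odd, so $x = 0$. Since $\gamma$ is an injective homomorphism between groups of the same cardinality, it is an isomorphism.
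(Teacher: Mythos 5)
Your proposal is correct and follows essentially the same route as the paper: a direct verification of the homomorphism property via the identity $C(x+y,2)=C(x,2)+C(y,2)+xy$, together with an easy bijectivity argument. The only (cosmetic) difference is that the paper checks that the explicit inverses $\gamma^{-1},\delta^{-1}$ are homomorphisms, which makes part a) a one-line expansion, whereas you work with $\gamma$ directly and therefore need the slightly longer bookkeeping for $pC(x_1+x_2+px_1x_2,2)$ --- which you carry out correctly, including the observation that $pC(px_1x_2,2)\equiv 0 \pmod{p^2}$ uses $p\neq 2$.
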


\begin{rema}
It should be noted that the occurrence of the binomial coefficient $C(x,2)$ is the crucial reason for the distinguation of the cases $p \neq 2$ and $p = 2$, for in the latter case the mappings given above will not even be well-defined.
\end{rema}

\begin{proof}
\begin{enumerate}[a)]
\item We have $\gamma^{-1}(x) = x + pC(x,2)$, which can be seen as follows:

$x \equiv y \mod p$ clearly implies $C(x,2) \equiv C(y,2) \mod p$.

Therefore $pC(x,2) = pC(x - pC(x,2))$ holds in $\Z_{p^2}$, implying that
\[
(x - pC(x,2)) + pC(x- pC(x,2),2) = x - pC(x,2) + pC(x,2) = x.
\]
We now show that $\gamma^{-1}$ is a homomorphism:
\begin{align*}
\gamma^{-1}(x_1 + x_2) & = x_1 + x_2 + pC(x_1+x_2,2) \\
& = x_1 + x_2 + pC(x_1,2) + pC(x_2,2) + px_1x_2 \\
& = x_1 + pC(x_1,2) + x_2 + pC(x_2,2) + p\left[ (x_1 + pC(x_1,2))(x_2 + pC(x_2,2))\right] \\
& = \gamma^{-1}(x_1) \circ \gamma^{-1}(x_2).
\end{align*}
\item It is easy to see that
\[
\delta^{-1}\begin{pmatrix}
x \\ y
\end{pmatrix} = \begin{pmatrix}
x + C(y,2) \\ y
\end{pmatrix}.
\]
Now we calculate
\begin{align*}
\delta^{-1} \begin{pmatrix}
x_1 + x_2 \\ y_1 + y_2
\end{pmatrix} & = \begin{pmatrix}
 x_1 + x_2 + C(y_1+y_2,2) \\ y_1 + y_2
 \end{pmatrix} \\
 & = \begin{pmatrix}
 x_1 + x_2 + C(y_1,2) + C(y_2,2) + y_1y_2 \\ y_1 + y_2
 \end{pmatrix} \\
& = \begin{pmatrix}
x_1 + C(y_1,2) \\ y_1
\end{pmatrix} + \begin{pmatrix}
x_2 + C(y_2,2) \\ y_2
\end{pmatrix} + \begin{pmatrix}
y_1 y_2 \\ 0
\end{pmatrix} \\
& = \begin{pmatrix}
x_1 + C(y_1,2) \\ y_1
\end{pmatrix} \circ \begin{pmatrix}
x_2 + C(y_2,2) \\ y_2
\end{pmatrix} \\
& = \delta^{-1}\begin{pmatrix}
x_1 \\ y_1
\end{pmatrix} \circ
\delta^{-1} \begin{pmatrix}
x_2 \\ y_2
\end{pmatrix}
\end{align*}
\end{enumerate}
\end{proof}

Using these isomorphisms, we can make the action of $\Aut(A)$ on $\Aut(A^{\circ})$ visible when $A$ is a brace of order $p^2$.

We use the abbreviation ${}^fg := f g f^{-1}$ when $f: A \to B$, $g: B \to B$ are maps and $f$ is bijective.

\begin{lem} \label{lem:translating_automorphism_groups_not_2}
With the isomorphisms $\gamma, \delta$ from \autoref{lem:explicit_isomorphisms_not_2} holds
\begin{enumerate}[a)]
\item ${}^{\gamma} \Aut(B^{p^2}) = 1 + p\Z_{p^2} \leq \Z_{p^2}^{\times}$,
\item ${}^{\delta} \Aut(B^{p,p}) = \left\{ 
\begin{pmatrix}
a & b \\ 0 & d
\end{pmatrix} \vline a = d^2
\right\} \leq \Aut(\Z_p^2)$.
\end{enumerate}
\end{lem}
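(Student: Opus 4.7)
The plan is to verify both parts by direct computation, using the explicit formulas for $\gamma$, $\delta$ and their inverses together with the additive structure of $\Aut(B^{p^2})$ and $\Aut(B^{p,p})$ already identified in \autoref{lem:automorphism_groups_p2}. The overall strategy is the same in both parts: for a general $\varphi$ in one of those automorphism groups, compute $\gamma \varphi \gamma^{-1}$ (resp.\ $\delta \varphi \delta^{-1}$) as a map on the adjoint group, expand the binomial-coefficient correction terms, and observe they either vanish modulo $p^2$ or merely shift a free parameter.

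For part a), any $\varphi \in \Aut(B^{p^2})$ is multiplication by some $a \in 1 + p\Z_{p^2}$. First I would compute
\[
\varphi(\gamma^{-1}(y)) = a\bigl(y + pC(y,2)\bigr) = ay + ap\,C(y,2).
\]
Since $a \equiv 1 \bmod p$, the argument satisfies $ay + apC(y,2) \equiv y \bmod p$, so applying $\gamma$ again only subtracts $pC(y,2) \bmod p^2$. This yields
\[
\gamma\varphi\gamma^{-1}(y) = ay + (a-1)p\,C(y,2) = ay,
\]
because $(a-1)p \in p^2\Z_{p^2} = 0$. Thus ${}^\gamma\varphi$ is again multiplication by $a$, giving the inclusion ${}^\gamma \Aut(B^{p^2}) \subseteq 1 + p\Z_{p^2}$, and equality follows since both sides have order $p$.

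For part b), the main computational step is the polynomial identity
\[
C(dy,2) = d^2\,C(y,2) + C(d,2)\,y,
\]
which follows directly from $d^2 y(y-1) - dy(dy-1) = d(1-d)y$ (and is well-defined mod $p$ for $p$ odd). For $\varphi\begin{pmatrix}x\\y\end{pmatrix} = \begin{pmatrix}d^2 x + by\\ dy\end{pmatrix}$ I would compute $\delta\varphi\delta^{-1}$ by substituting $\delta^{-1}\begin{pmatrix}x\\y\end{pmatrix} = \begin{pmatrix}x + C(y,2)\\y\end{pmatrix}$, applying $\varphi$, and then $\delta$. The $d^2 C(y,2)$ term produced by $\varphi$ and the $-C(dy,2)$ term produced by $\delta$ combine via the identity above to leave only a linear correction $-C(d,2)\,y$ in the first coordinate. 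Hence
\[
{}^\delta\varphi = \begin{pmatrix} d^2 & b - C(d,2) \\ 0 & d \end{pmatrix}.
\]
Since $b$ runs through all of $\Z_p$, so does $b - C(d,2)$, and the image is precisely the claimed subgroup of upper-triangular matrices with diagonal $(d^2,d)$.

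The main obstacle is just bookkeeping of the $C(\cdot,2)$ terms — in particular, checking in part a) that the residual $(a-1)pC(y,2)$ really dies in $\Z_{p^2}$, and in part b) that the identity $C(dy,2) = d^2 C(y,2) + C(d,2)y$ makes the correction from $\delta$ exactly cancel the correction from $\varphi$ up to a linear shift. Both simplifications rely on $p$ odd, consistent with the remark following \autoref{lem:explicit_isomorphisms_not_2}.
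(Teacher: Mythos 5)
Your proposal is correct; both computations check out. Part b) is in substance the same calculation as the paper's: the paper expands $aC(y,2)-C(dy,2)=\tfrac{1}{2}(d-a)y$ directly, which is exactly your identity $C(dy,2)=d^2C(y,2)+C(d,2)y$ in disguise (note $b+\tfrac{1}{2}(d-d^2)=b-C(d,2)$), and both arguments conclude by observing that $b$ sweeps out all of $\Z_p$. Where you genuinely diverge is part a): the paper deliberately avoids any computation there, arguing instead that $\Aut(B^{p^2})=1+p\Z_{p^2}$ consists of the $p$ elements of $\Z_{p^2}^{\times}$ of order dividing $p$, that conjugation by $\gamma$ preserves group order, and that $\Z_{p^2}^{\times}$ (being cyclic) has a unique subgroup of order $p$ — so the image must again be $1+p\Z_{p^2}$. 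Your direct computation is a fair trade: it costs the bookkeeping with $pC(\cdot,2)$ (which you handle correctly, using that $pC(x,2)$ depends only on $x \bmod p$ and that $(a-1)p\equiv 0 \bmod p^2$), but it buys the strictly stronger conclusion that ${}^{\gamma}\varphi=\varphi$ elementwise, i.e.\ $\gamma$-conjugation is the identity on this subgroup, not merely that it maps the subgroup onto itself.
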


\begin{proof}
We have already determined the automorphism groups of the braces of order $p^2$ in \autoref{lem:automorphism_groups_p2}.

So it remains to translate them via $\gamma, \delta$:
\begin{enumerate}[a)]
\item In this case we do not need to calculate anything:

$\Aut(B^{p^2})$ has been determined as $1+p\Z_{p^2}$. These are exactly the elements in $\Aut\Z_{p^2}$ of order dividing $p$.

Applying the automorphism $\varphi \mapsto {}^{\gamma}\varphi$ will not alter this property - it follows that ${}^{\gamma}\Aut(B^{p^2}) \leq \Z_{p^2}^{\times}$ consists of $p$ elements of order dividing $p$ and must therefore coincide with $1 + p\Z_{p^2}$.
\item Here a calculation is necessary:

Applying $\varphi \mapsto {}^{\delta}\varphi$ to an automorphism of the form
$\varphi \begin{pmatrix}
x \\ y
\end{pmatrix} =
\begin{pmatrix}
a & b \\ 0 & d
\end{pmatrix}
\begin{pmatrix}
x \\ y
\end{pmatrix}
$ (with $a = d^2$) gives us
\begin{align*}
{}^{\delta}\varphi \begin{pmatrix}
x \\ y
\end{pmatrix} & = (\delta \varphi \delta^{-1})
\begin{pmatrix}
x \\ y
\end{pmatrix} \\
& = \delta \left( \begin{pmatrix}
a & b \\ 0 & d
\end{pmatrix}
\begin{pmatrix}
x + C(y,2) \\ y
\end{pmatrix}
 \right) \\
& = \delta \begin{pmatrix}
ax + aC(y,2) + by \\ dy
\end{pmatrix} \\
& = \begin{pmatrix}
ax + by + aC(y,2) - C(dy,2) \\ dy
\end{pmatrix} \\
& = \begin{pmatrix}
ax + \left( b + \frac{1}{2}(d - a) \right)y \\ dy
\end{pmatrix} \quad (\textnormal{using }a = d^2) \\
& = \begin{pmatrix}
a & b + \frac{1}{2}(d - a) \\ 0 & d
\end{pmatrix}
\begin{pmatrix}
x \\ y
\end{pmatrix}.
\end{align*}
For $b$ can take any value in $\Z_p$ we conclude that ${}^{\delta}\varphi$ again runs through all matrices of the form $\begin{pmatrix}
a & b^{\prime} \\ 0 & d
\end{pmatrix}$ with $a = d^2$.
\end{enumerate}
\end{proof}

Making $\Z_q$ an $A_p$-module is the same as giving an element $\varphi \in \Hom(A_p^{\circ},\Z_q^{\times})$, so that the module structure is given by $x \cdot a = \varphi(x)a - a$.

Furthermore, an element of $\Hom(\Z_{p^2},\Z_q^{\times})$ is given by $x \mapsto \omega^x$ where $\omega \in \Z_q^{\times}$ fulfils $\omega^{p^2}$. 

On the other hand, an element of $\Hom(\Z_p^2,\Z_q^{\times})$ can be written as $(x,y) \mapsto \omega^x \mu^y$ where $\omega,\mu \in \Z_q^{\times}$ fulfil $\omega^p = \mu^p = 1$.

With \autoref{prop:modules_are_circ_modules} and using the explicit isomorphisms of \autoref{lem:explicit_isomorphisms_not_2} we can make all these module structures explicit.

By \autoref{thm:p2q_braces_are_semidirect}, the resulting semidirect products make up the entirety of braces of order $p^2q$ ($q > p+1$).

This results in a coarse classification theorem which we state while at the same time introducing a nomenclature for the corresponding braces:

\begin{thm}[Coarse classification] \label{thm:coarse_classification_not_2}
The braces of order $p^2q$ with $q > p+1$  and $p \neq 2$ are given by
\begin{enumerate}[i)]
\item $T^{p,p,q}_{\omega,\mu} = \Z_p^2 \oplus \Z_q$, with multiplication
\[
\left( \begin{pmatrix}
x_1 \\ y_1
\end{pmatrix},a_1 \right) \cdot
\left( \begin{pmatrix}
x_2 \\ y_2
\end{pmatrix},a_2 \right) =
\left(
\begin{pmatrix}
0 \\ 0
\end{pmatrix},
(\omega^{x_1}\mu^{x_2}-1)a_2 \right)
\]
with $\omega,\mu \in \Z_q^{\times}$ fulfilling $\omega^p = \mu^p = 1$.
\item $T^{p^2,q}_{\omega} = \Z_{p^2} \oplus Z_q$, with multiplication
\[
(x_1,a_1) \cdot (x_2,a_2) = \left( 0, (\omega^{x_1}-1)a_2 \right)
\]
with $\omega \in \Z_q^{\times}$ fulfilling $\omega^{p^2} = 1$.
\item $B^{p,p,q}_{\omega,\mu} = \Z_p^2 \oplus \Z_q$, with multiplication
\[
\left( \begin{pmatrix}
x_1 \\ y_1
\end{pmatrix},a_1 \right) \cdot
\left( \begin{pmatrix}
x_2 \\ y_2
\end{pmatrix},a_2 \right) =
\left(
\begin{pmatrix}
y_1 y_2 \\ 0
\end{pmatrix},
(\omega^{x-C(y,2)} \mu^{y} -1)a_2 \right)
\]
with $\omega,\mu \in \Z_q^{\times}$ fulfilling $\omega^p = \mu^p = 1$.
\item $B^{p^2,q}_{\omega} = \Z_{p^2} \oplus \Z_q$, with multiplication
\[
(x_1,a_1) \cdot (x_2,a_2) = \left( px_1x_2, (\omega^{x_1-pC(x_1,2)}-1)a_2 \right)
\]
with $\omega \in \Z_q^{\times}$ fulfilling $\omega^{p^2} = 1$.
\end{enumerate}
\end{thm}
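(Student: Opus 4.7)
The plan is to combine the structure theorem \autoref{thm:p2q_braces_are_semidirect}, Bachiller's classification \autoref{thm:bachiller_classification}, and the explicit adjoint-group isomorphisms of \autoref{lem:explicit_isomorphisms_not_2}. By \autoref{thm:p2q_braces_are_semidirect} every brace $A$ of order $p^2q$ with $q>p+1$ decomposes as an external semidirect product $A_p \ltimes A_q$ in which $A_q \cong \Z_q$ is a trivial ideal, so it is enough to: first, list all brace structures on $A_p$; second, for each, enumerate the $A_p$-module structures on $\Z_q$; and third, substitute the resulting module multiplications into the formula of \autoref{def:semidirect_product}. \autoref{thm:bachiller_classification} tells us there are exactly four choices for $A_p$ up to isomorphism, namely $T^{p,p},T^{p^2},B^{p,p},B^{p^2}$; these four cases correspond to the four families i)--iv).

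Next, I enumerate the $A_p$-module structures on $\Z_q$. By \autoref{prop:modules_are_circ_modules} giving a module structure is the same as giving a group homomorphism $\varphi : A_p^{\circ} \to \Aut(\Z_q) = \Z_q^{\times}$, and under this correspondence the brace-module multiplication reads $a \cdot x = \varphi(a)x - x = (\varphi(a)-1)x$. For the trivial braces $T^{p,p},T^{p^2}$ the adjoint group coincides with the additive group, so the homomorphisms $(T^{p,p})^{\circ} \to \Z_q^\times$ and $(T^{p^2})^{\circ} \to \Z_q^\times$ are parametrized respectively by pairs $(\omega,\mu) \in (\Z_q^\times)^2$ with $\omega^p = \mu^p = 1$ and single elements $\omega \in \Z_q^\times$ with $\omega^{p^2}=1$. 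Plugging into \autoref{def:semidirect_product}, with the zero multiplication on $A_p$, produces the formulas in i) and ii).

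For cases iii) and iv), the essential step is to observe that the adjoint groups $(B^{p,p})^{\circ}$ and $(B^{p^2})^{\circ}$ are canonically isomorphic to $\Z_p^2$ and $\Z_{p^2}$ via $\delta$ and $\gamma$ from \autoref{lem:explicit_isomorphisms_not_2}. Hence every homomorphism $(B^{p,p})^{\circ} \to \Z_q^\times$ has the form
\[
\begin{pmatrix} x \\ y \end{pmatrix} \;\longmapsto\; \omega^{(\delta(x,y))_1}\mu^{(\delta(x,y))_2} \;=\; \omega^{x - C(y,2)}\mu^y
\]
for $\omega,\mu \in \Z_q^\times$ with $\omega^p = \mu^p = 1$, and every homomorphism $(B^{p^2})^{\circ} \to \Z_q^\times$ has the form $x \mapsto \omega^{\gamma(x)} = \omega^{x - pC(x,2)}$ for $\omega \in \Z_q^\times$ with $\omega^{p^2}=1$. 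Combining these with the nontrivial multiplication on $A_p$ provided by \autoref{thm:bachiller_classification} yields the formulas iii) and iv) directly.

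There is no real obstacle, as every nontrivial ingredient is already in place; the main task is bookkeeping, and in particular checking that the order conditions on $\omega,\mu$ are exactly what is needed for the stated maps to define group homomorphisms out of $A_p^{\circ}$. It is worth emphasising that this is a \emph{coarse} classification: distinct parameter choices $(\omega,\mu)$ or $\omega$ can still yield isomorphic braces, and the reduction modulo the $\Aut(A_p)$-action provided by \autoref{cor:module_orbits_under_ap} is what the subsequent subsections carry out in order to obtain a list free of isomorphic duplicates.
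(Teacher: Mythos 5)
Your proposal is correct and follows essentially the same route as the paper: the text preceding \autoref{thm:coarse_classification_not_2} likewise combines \autoref{thm:p2q_braces_are_semidirect}, Bachiller's list, \autoref{prop:modules_are_circ_modules}, and the isomorphisms $\gamma,\delta$ of \autoref{lem:explicit_isomorphisms_not_2} to parametrize $\Hom(A_p^{\circ},\Z_q^{\times})$ and then substitutes into the external semidirect product formula. Your closing remark that this list still contains isomorphic duplicates, to be resolved via \autoref{cor:module_orbits_under_ap}, matches the paper's stated plan exactly.
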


Now remains the task of singling out repetitions in this list, i.e. of finding one representative for each isomorphism class only.

Following \autoref{cor:module_orbits_under_ap} it suffices to determine the orbits of the natural action of $\Aut(A_p)$ on $\Hom(A_p^{\circ},\Z_q^{\times})$.

This method allows us to find non-isomorphic representants for the isomorphism classes of the braces described above. This task will be treated in the following subsections.

\subsection{\texorpdfstring{$p \nmid q-1$}{p -| q-1}:} \label{subsec:p_part_1}~

For $q-1 = \vert \Z_q^{\times} \vert$ is not divisible by $p$, we have $\Hom(A_p^{\circ},\Z_q^{\times}) = \{ 1 \}$ in every case.

We could alternatively argue that we can only assign $\omega,\mu = 1$. Consequently, the only module structure on $\Z_q$ is the trivial one.

Therefore, for $p \nmid q-1$, only the braces $T_{1,1}^{p,p,q}$, $T_{1}^{p^2,q}$, $B_{1,1}^{p,p,q}$, $B_{1}^{p^2,q}$ can arise.

\subsection{\texorpdfstring{$p \mid q-1, p^2 \nmid q-1$}{p | q-1, p\texttwosuperior\ -| q-1}:} \label{subsec:p_part_2}~

In this case, the image of any homomorphism $\varphi: A_p^{\circ} \to \Z_q^{\times}$ must consist of elements of order $p$.

Using that these elements make up a group isomorphic to $\Z_p$ we must therefore determine the orbits of $\Hom(A_p^{\circ},\Z_p)$ under the action of $\Aut(A_p)$:

\begin{enumerate}[a)]
\item $A_p \cong T^{p,p}$:

Here we have that $\Aut(A_p)$ is the full automorphism group of $A_p^{\circ} = \Z_p^2$. We must therefore determine the orbits of $\Hom(\Z_p^2,\Z_p)$ under $\Aut(\Z_p^2)$.

Writing the elements of $\Hom(\Z_p^2,\Z_p)$ and $\Aut(\Z_p^2)$ as matrices $(x \ y)$ resp. $\begin{pmatrix}
a & b \\ c & d
\end{pmatrix}$, the latter being invertible, this action is represented by matrix multiplications from the right.

Therefore there are only two orbits: the orbit represented by the zero homomorphism and the orbit represented by any non-zero homomorphism, say, $(1 \ 0)$.

This results in the braces $T^{p,p}_{1,1}$ and $T^{p,p}_{\omega,1}$ where $\omega$ is a fixed $p$-th root of unity in $\Z_{q}^{\times}$.

\item $A_p \cong T^{p^2}$:

$\Aut(A_p)$ is the full automorphism group of $A_p^{\circ} = \Z_{p^2}$, i.e. $\Z_{p^2}^{\times}$:

Any element of $\Hom(\Z_{p^2},\Z_p)$ is given by sending the generator $1$ to an element $a \in \Z_p$. Call this homomorphism $\varphi_a$

Applying an $x \in \Z_{p^2}^{\times}$ to this element gives us the homomorphism $\varphi_{\overline{x}a}$ where $\overline{x} \in \Z_p^{\times}$ is the reduction of $x$ modulo $p$. The reduction map is clearly a surjection $\Z_{p^2}^{\times} \to \Z_p^{\times}$, therefore $\varphi_{a_1}$ and $\varphi_{a_2}$ lie in the same orbit iff $a_1 = a_2 = 0$ or $a_1,a_2 \neq 0$.

This results in the two braces $T^{p^2}_{1}$ and $T^{p^2}_{\omega}$ where $\omega \in \Z_q^{\times}$ is a fixed $p$-th root of unity.

\item $A_p \cong B^{p,p}$:

Writing the action as matrix multiplications, the action is given by right-multipli\-cation of vectors $(x \ y)$ by invertible matrices $\begin{pmatrix}
a & b \\ 0 & d
\end{pmatrix}$ ($a = d^2$).

It is easy to see that there is one orbit consisting solely of $(0 \ 0)$.

The orbit of $(0 \ 1)$ is made up from all vectors $(0 \ d)$ with $d \neq 0$.

For $x \neq 0$, the orbit of $(x \ 0)$ consists of all vectors of the form $(ax \ b) = (d^2 x \ b)$ with $d \neq 0$.

$b$ can be arbitrarily chosen but it becomes apparent here that two vectors $(x_1 \ y_1),\allowbreak (x_2 \ y_2)$ with $x_1,x_2 \neq 1$ lie in the same orbit iff $\frac{x_1}{x_2}$ is of the form $d^2$, i.e. iff both of $x_{1,2}$ are quadratic residues resp. non-residues in $\Z_p^{\times}$.

This results in $4$ classes of braces:

$B^{p,p}_{1,1}$, $B^{p,p}_{1,\omega}$, $B^{p,p}_{\omega,1}$, $B^{p,p}_{\omega^{\eta},1}$ where $\omega \in \Z_q^{\times}$ is a $p$-th root of unity and $\eta \in \Z_p^{\times}$ is a quadratic non-residue.
\item $A_p \cong B^{p^2}$:

Identifying $(B^{p^2})^{\circ} \cong \Z_{p^2}$, the action on $\Hom(\Z_{p^2},\Z_p)$ is given by multiplying the argument with elements $x \in 1 + p\Z_{p^2}$. As above, we write the homomorphism sending $1$ to $a \in \Z_p$ as $\varphi_a$. Under the action of $x$, however, $\varphi_a$ is mapped to $\varphi_{\overline{x}a} = \varphi_{a}$, i.e. there is no non-trivial action.

The orbits are therefore given by the single elements of $\Hom(\Z_{p^2},\Z_p)$, i.e. $\varphi_0,\varphi_1,\ldots,\allowbreak \varphi_{p-1}$, resulting in the following $p$ braces:

$B^{p,p}_{1}$ and the $p-1$ braces $B^{p,p}_{\omega^i}$ ($i = 1,\ldots,p-1$) where $\omega \in \Z_q^{\times}$ is a fixed $p$-th root of unity.
\end{enumerate}

\subsection{\texorpdfstring{$p^2 \mid q-1$}{p \texttwosuperior\ | q-1}:} \label{subsec:p_part_3}~

The image of a homomorphism $\varphi: A_p^{\circ} \to \Z_q^{\times}$ now consists of elements of order dividing $p^2$.

These elements make up a subgroup of $\Z_q^{\times}$ isomorphic to $\Z_{p^2}$, so the problem is reduced to the determination of the orbits of $\Hom(A_p^{\circ},\Z_{p^2})$ under the action of $\Aut(A_p)$.

In the cases $A_p \cong T^{p,p}, B^{p,p}$, no new braces can arise, for $A_p^{\circ}$ does not have any elements of order $p^2$.

Therefore, we have to concentrate on $T^{p^2}$ and $B^{p^2}$:

\begin{enumerate}[a)]
\item $A_p \cong T^{p^2}$:

Clearly, $\Aut(T^{p^2})$ is the full automorphism group of $(T^{p^2})^{\circ} = \Z_{p^2}^{\times}$ and can be identified with $\Z_{p^2}^{\times}$.

Each element of $\Hom(\Z_{p^2},\Z_{p^2})$ can be represented by a multiplication with an $a \in \Z_{p^2}$.

The action is therefore represented by multiplying arbitrary elements of $\Z_{p^2}$ by units in $\Z_{p^2}$. This implies that $a_{1,2}$ are in the same orbit iff they generated the same subgroup in $\Z_{p^2}$.

Therefore, representants of the different orbits may given by $a= 0,1,p$, resulting in three braces:

$T^{p^2,q}_{1}$, $T^{p^2,q}_{\omega}$, and $T^{p^2,q}_{\tilde{\omega}}$ where $\omega,\tilde{\omega}$ is a fixed $p$-th respective $p^2$-th root of unity in $\Z_q^{\times}$.
\item $A_p \cong B^{p^2}$:

We set up the analysis as in the case $T^{p^2}$ but now the action is represented by multiplying arbitrary $a \in \Z_{p^2}$ by units in $1+p\Z_{p^2}$.

If $a_{1,2}$ are units themselves then they lie in the same orbit iff $\frac{a_1}{a_2} \in 1 + p\Z_{p^2}$, i.e. iff $a_1 - a_2 \in p\Z_{p^2}$. This results in $p-1$ orbits which may be represented by $1,\ldots,p-1$.

If $a = pj$ ($j \in \Z_{p^2}$) it is fixed under the action of $1 + p\Z_{p^2}$, as can be seen from the calculation
\[
pj (1+ pk) = pj + p^2(jk) = pj.
\]
Therefore, any such $a$ makes up a single orbit and we can represent these by the $p$ elements $0,p,2p,\ldots,(p-1)p$.

This results in the following $2p-1$ braces:

$B^{p^2}_{1}$, $B^{p^2}_{\omega^i}$, $B^{p^2}_{\tilde{\omega}^j}$ with $\omega,\tilde{\omega}$ as above and $i,j = 1,\ldots p-1$.
\end{enumerate}

\subsection{Conclusion}

\begin{thm} [Fine classification] \label{thm:fine_classification_p_not_2}
Every brace of order $p^2q$ with $2 < p < q$ is isomorphic to \emph{exactly} one of the following braces:
\begin{itemize}
\item \begin{enumerate}[a)]
\item $T^{p,p,q}_{1,1}$,
\item $T^{p^2,q}_{1}$
\item $B^{p,p,q}_{1,1}$,
\item $B^{p^2,q}_{1}$,
\end{enumerate}
\item if $p \mid q-1$, additionally one of
\begin{enumerate}[a)] \setcounter{enumi}{5}
\item $T^{p,p,q}_{\omega,1}$,
\item $T^{p^2,q}_{\omega}$,
\item the three braces $B^{p,p}_{1,\omega}$, $B^{p,p}_{\omega,1}$ $B^{p,p}_{\omega^{\eta},1}$
\item the $p-1$ braces $B^{p^2}_{\omega^i}$ ($i = 1,\ldots,p-1$)
\end{enumerate}
where $\omega$ is a fixed $p$-th root of unity in $\Z_q^{\times}$ and $\eta$ is a fixed quadratic non-residue in $\Z_p^{\times}$.
\item 
additionally, if $p^2 \mid q-1$, one of
\begin{enumerate}[a)] \setcounter{enumi}{9}
\item $T^{p^2,q}_{\tilde{\omega}}$,
\item the $p-1$ braces $T^{p^2,q}_{\tilde{\omega}^i}$ ($i = 1,\ldots,p-1$)
\end{enumerate}
where $\tilde{\omega}$ is a fixed $p^2$-th root of unity in $\Z_{q}^{\times}$.
\end{itemize}
\end{thm}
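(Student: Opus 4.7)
The plan is to assemble the classification directly from the machinery built up in this section. The starting point is \autoref{thm:p2q_braces_are_semidirect}: every brace $A$ of order $p^2q$ (with $q > p+1$) decomposes as an internal semidirect product $A_p \ltimes A_q$, so that $A_p$, with the restricted multiplication, is a subbrace. Since the underlying additive group of $A_p$ is the $p$-primary component of $A^+$ and its multiplication is the restriction of that of $A$, the isomorphism type of $A_p$ as a brace is a brace invariant of $A$. Bachiller's \autoref{thm:bachiller_classification} then forces $A_p$ to be one of $T^{p,p}, T^{p^2}, B^{p,p}, B^{p^2}$, stratifying the problem into four disjoint families which match exactly the four lines of \autoref{thm:coarse_classification_not_2}.

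Within each stratum I would invoke \autoref{cor:module_orbits_under_ap} to reduce the enumeration of isomorphism classes to counting orbits of $\Aut(A_p)$ acting by precomposition on $\Hom(A_p^\circ, \Z_q^\times)$. Because the image of any such homomorphism lies in the $p^2$-torsion subgroup of $\Z_q^\times$, which is cyclic of order $\gcd(p^2, q-1)$, the problem splits naturally along the three cases of the theorem. If $p \nmid q-1$, only the trivial homomorphism exists and this recovers exactly the four braces with trivial subscripts. If $p \mid q-1$ but $p^2 \nmid q-1$, the image is contained in a subgroup isomorphic to $\Z_p$; using the translations of the automorphism groups through $\gamma,\delta$ provided by \autoref{lem:translating_automorphism_groups_not_2}, I would enumerate the orbits of $\Aut(A_p)$ on $\Hom(A_p^\circ, \Z_p)$ exactly as done in \autoref{subsec:p_part_2}. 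The case $p^2 \mid q-1$ is handled analogously with target $\Z_{p^2}$; here the families with $A_p \cong T^{p,p}$ or $B^{p,p}$ contribute nothing new because $(T^{p,p})^\circ$ and $(B^{p,p})^\circ$ have exponent $p$, so only the cyclic $A_p$'s yield further representatives, reproducing \autoref{subsec:p_part_3}.

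To conclude that the list contains each isomorphism class exactly once, one must check that no collisions occur. Cross-stratum collisions are ruled out by the brace invariance of $A_p$. Collisions between the three sublists within a fixed stratum are excluded because the corresponding representatives come from homomorphisms whose images generate subgroups of different orders (trivial, $p$, or $p^2$) inside $\Z_q^\times$, and the precomposition action preserves the image and thus cannot identify homomorphisms with images of different orders.

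The main obstacle will be the orbit computation in the case $A_p \cong B^{p,p}$ with $p \mid q-1$: the constraint $a = d^2$ cutting out $\Aut(B^{p,p})$ inside $GL_2(\Z_p)$ means that the action on $\Hom(\Z_p^2,\Z_p)$ does not merge all nonzero characters, but splits those with nonzero first coordinate according to the quadratic residue class of that coordinate, which is what produces the additional representative $B^{p,p}_{\omega^\eta,1}$ alongside $B^{p,p}_{\omega,1}$. All remaining orbit computations are routine, because the relevant automorphism group acts either as the full automorphism group of a cyclic target or transitively on a set of nonzero elements.
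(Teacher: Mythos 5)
Your proposal is correct and follows essentially the same route as the paper: the theorem is obtained there exactly by combining \autoref{thm:p2q_braces_are_semidirect}, Bachiller's classification, and \autoref{cor:module_orbits_under_ap}, and then carrying out the orbit computations of \autoref{subsec:p_part_1}--\autoref{subsec:p_part_3}, including the quadratic-residue splitting for $B^{p,p}$ that you single out. Your explicit remarks on why no collisions occur (brace-invariance of the subbrace $A_p$ and preservation of the image of the character under precomposition) are left implicit in the paper but are correct and welcome.
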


Recall that $b(n)$ was defined as the number of braces of order $n$.

Summing everything up we get the result:

\begin{cor} \label{cor:counting_p_not_2}
For primes $2 < p < q$ holds
\[
b(p^2q) = \begin{cases}
4 & p \nmid q-1 \\
p + 8 & p \mid q-1,p^2 \nmid q-1 \\
2p + 8 & p^2 \mid q-1.
\end{cases}
\]
\end{cor}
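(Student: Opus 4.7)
The plan is straightforward: \autoref{thm:fine_classification_p_not_2} already provides a complete, irredundant list of representatives for the isomorphism classes of braces of order $p^2q$ (with $2 < p < q$), so $b(p^2q)$ is computed simply by tallying the items in that list under each of the three divisibility hypotheses. First I would observe that items a)--d) always appear, contributing a baseline of $4$ braces; when $p \nmid q-1$, \autoref{subsec:p_part_1} forces every $A_p$-module structure on $\Z_q$ to be trivial, so no further braces arise and $b(p^2q) = 4$.

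Next, under the assumption $p \mid q-1$ but $p^2 \nmid q-1$, the orbit counts of \autoref{subsec:p_part_2} add respectively $1$, $1$, $3$ and $p-1$ new braces for the four choices of $A_p \in \{T^{p,p}, T^{p^2}, B^{p,p}, B^{p^2}\}$, so summing with the baseline yields $b(p^2q) = 4 + (1 + 1 + 3 + (p-1)) = p + 8$.

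Finally, under $p^2 \mid q-1$, \autoref{subsec:p_part_3} shows that only $A_p \cong T^{p^2}$ and $A_p \cong B^{p^2}$ can support genuinely new module structures, since the other two choices have $A_p^{\circ}$ of exponent $p$ and therefore admit no additional homomorphisms into $\Z_q^{\times}$. These contribute $1$ and $p-1$ more braces respectively, pushing the total to $(p+8) + p = 2p+8$. The corollary is thus purely a bookkeeping step: all substantive work has been carried out in the structure theorem \autoref{thm:p2q_braces_are_semidirect}, the reduction to orbit counts of \autoref{cor:module_orbits_under_ap}, and the case analyses of \autoref{subsec:p_part_1}--\autoref{subsec:p_part_3}, so no real obstacle remains.
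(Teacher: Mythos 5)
Your proposal is correct and follows exactly the paper's route: the corollary is obtained by tallying the representatives in \autoref{thm:fine_classification_p_not_2}, with the baseline of $4$, the additional $1+1+3+(p-1)=p+4$ braces when $p \mid q-1$, and the further $1+(p-1)=p$ braces (from $T^{p^2}$ and $B^{p^2}$ only, since $(T^{p,p})^{\circ}$ and $(B^{p,p})^{\circ}$ have exponent $p$) when $p^2 \mid q-1$. The paper's own proof is just the phrase ``summing everything up,'' so your bookkeeping is if anything more explicit than the original.
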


\begin{rema}
For $p \nmid q-1$, \autoref{cor:counting_p_not_2} has already been proved by Smoktunowicz in \cite[p.6]{Smoktunowicz_note_on_sybe}. She shows that in the respective semidirect decomposition $A = A_p \ltimes A_q$ the component $A_q$ must be a trivial $A_p$-module, implying that $A$ is a direct product $A_p \times A_q$ with no interplay between $A_p$ and $A_q$. $A_q$, being of prime order, must therefore be trivial, and $A_p$ must be one of the four braces of \autoref{thm:bachiller_classification}, therefore verifying the conjecture of Guarnieri and Vendramin regarding this case.

Essentially, in this case, our argument runs along the same lines as the one given by Smoktunowicz.
\end{rema}

\section{The case \texorpdfstring{$p = 2$}{p = 2}} \label{sec_case_p_2}

\subsection{Preparations}

In this section, we will sometimes write the elements of $\Z_4$ as their corresponding binary strings, i.e. $0 = \underline{00}$, $1 = \underline{01}$, $2 = \underline{10}$ and $3 = \underline{11}$. The reason will soon become apparent.

Again, we start our analysis by making the groups $A_2^{\circ}$ explicit:

\begin{lem} \label{lem:explicit_isomorphisms_2}
\begin{enumerate}[a)]
\item The map
\begin{align*}
\alpha: B^{2,2} & \to \Z_4 \\
\begin{pmatrix}
x \\ y
\end{pmatrix} & \mapsto \underline{xy}
\end{align*}
gives an isomorphism between $(B^{2,2})^{\circ}$ and $\Z_4$.
\item The map
\begin{align*}
\beta: B^{4} & \to \Z_2^2 \\
\underline{xy} & \mapsto \begin{pmatrix}
x \\ y
\end{pmatrix}
\end{align*}
is an isomorphism between $(B^4)^{\circ}$ and $\Z_2^2$.
\end{enumerate}
\end{lem}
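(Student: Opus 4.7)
The plan is to verify both claims by direct computation: each map is evidently a bijection between four-element sets, so the only real task is to show that the circle operation on the left-hand side is transported to addition on the right-hand side.

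For part (a), I would first expand the circle operation on $B^{2,2}$, which reads
\[
\begin{pmatrix} x_1 \\ y_1 \end{pmatrix} \circ \begin{pmatrix} x_2 \\ y_2 \end{pmatrix} = \begin{pmatrix} x_1 + x_2 + y_1 y_2 \\ y_1 + y_2 \end{pmatrix},
\]
with coordinates reduced modulo $2$. Interpreting $\alpha\begin{pmatrix} x \\ y \end{pmatrix} = 2x + y \in \Z_4$, the homomorphism property amounts to checking
\[
2\bigl[(x_1 + x_2 + y_1 y_2) \bmod 2\bigr] + \bigl[(y_1 + y_2) \bmod 2\bigr] \equiv 2(x_1 + x_2) + (y_1 + y_2) \pmod{4}.
\]
I would inspect the four cases $(y_1, y_2) \in \{0,1\}^2$. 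All but $y_1 = y_2 = 1$ are immediate; in that remaining case, the low-bit carry $y_1 + y_2 = 2$ on the right is balanced exactly by the summand $2 y_1 y_2 = 2$ appearing inside the first coordinate on the left, so both sides agree modulo $4$.

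For part (b), the circle operation on $B^4 = \Z_4$ is $x_1 \circ x_2 = x_1 + x_2 + 2 x_1 x_2$. Writing $x_i = 2 a_i + b_i$ with $a_i, b_i \in \{0,1\}$, a short expansion gives
\[
x_1 \circ x_2 \equiv 2(a_1 + a_2 + b_1 b_2) + (b_1 + b_2) \pmod{4}.
\]
Once again the only subtle case is $b_1 = b_2 = 1$, where the carry from $b_1 + b_2 = 2$ cancels against the contribution $2 b_1 b_2 = 2$; in every case, the two binary digits of $x_1 \circ x_2$ turn out to be precisely $(a_1 + a_2) \bmod 2$ and $(b_1 + b_2) \bmod 2$. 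Applying $\beta$ then yields $\beta(x_1 \circ x_2) = \begin{pmatrix} a_1 + a_2 \\ b_1 + b_2 \end{pmatrix} = \beta(x_1) + \beta(x_2)$ in $\Z_2^2$, as required.

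The only obstacle is the bookkeeping of carries in base-$2$ arithmetic modulo $4$; the argument is entirely parallel to the proof of \autoref{lem:explicit_isomorphisms_not_2}, with the identity \eqref{eq:chu_vandermonde} degenerating in characteristic $2$ to the tautological observation that the product $b_1 b_2$ of two bits records exactly the carry produced by their sum $b_1 + b_2$.
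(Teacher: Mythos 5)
Your proof is correct and follows exactly the route the paper intends: the paper's own ``proof'' is just the remark that this is a long but easy calculation, and you have carried out that direct verification, correctly identifying the circle operations on $B^{2,2}$ and $B^4$ and checking that the carry in the low binary digit is absorbed by the terms $y_1y_2$ resp.\ $2x_1x_2$. Nothing is missing.
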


\begin{proof}
This is a long but easy calculation.
\end{proof}

Again we express the action of the (brace) automorphism groups determined in \autoref{lem:automorphism_groups_p2} on $A_2^{\circ}$ in terms of these explicit isomorphisms:

\begin{lem} \label{lem:translating_automorphism_groups_2}
\begin{enumerate}[a)]
\item $^{\alpha}\Aut(B^{2,2}) = \{ \pm 1 \} = \Z_4^{\times}$
\item $^{\beta}\Aut(B^4) = \left\{ \begin{pmatrix}
1 & a \\ 0 & 1
\end{pmatrix} \vline a \in \Z_2 \right\} \leq \Aut(\Z_2^2)$.
\end{enumerate}
\end{lem}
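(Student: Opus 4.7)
The plan is to verify both identities by direct computation, using the automorphism groups from \autoref{lem:automorphism_groups_p2} together with the explicit isomorphisms $\alpha, \beta$ of \autoref{lem:explicit_isomorphisms_2}. The crucial simplifying observation is that when $p = 2$ both $\Aut(B^{2,2})$ and $\Aut(B^4)$ have order $2$: specializing the description of $\Aut(B^{p,p})$ to $p = 2$ forces $d \in \Z_2^{\times} = \{1\}$ and then $a = d^2 = 1$, leaving only $b \in \Z_2$ free; specializing $\Aut(B^{p^2}) = 1 + p\Z_{p^2}$ gives $\{1, 3\} \leq \Z_4^{\times}$. Since the right-hand sides in (a) and (b) also have order $2$, it suffices in each case to identify the image of a single non-identity automorphism under the conjugation map $\varphi \mapsto {}^{\alpha} \varphi$, respectively $\varphi \mapsto {}^{\beta} \varphi$, and check that it lies in the claimed target.

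For part (a), the unique non-identity element of $\Aut(B^{2,2})$ is $\varphi = \begin{pmatrix} 1 & 1 \\ 0 & 1 \end{pmatrix}$, which acts as $\begin{pmatrix} x \\ y \end{pmatrix} \mapsto \begin{pmatrix} x + y \\ y \end{pmatrix}$. Computing $\alpha \varphi \alpha^{-1}$ on each of the four elements of $\Z_4$ yields the table $0 \mapsto 0$, $1 \mapsto 3$, $2 \mapsto 2$, $3 \mapsto 1$, which is precisely the map $n \mapsto -n$ on $\Z_4$; hence ${}^{\alpha} \varphi$ is multiplication by $-1$, and since $\Z_4^{\times} = \{\pm 1\}$ also has order $2$, equality follows.

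For part (b), the unique non-identity element of $\Aut(B^4)$ is $\varphi_3 \colon x \mapsto 3x$ on $\Z_4$, which sends $1 \mapsto 3$, $2 \mapsto 2$, $3 \mapsto 1$. Carrying the four elements through $\beta^{-1}$, then $\varphi_3$, then $\beta$ yields the map $\begin{pmatrix} x \\ y \end{pmatrix} \mapsto \begin{pmatrix} x + y \\ y \end{pmatrix}$, which is precisely the matrix $\begin{pmatrix} 1 & 1 \\ 0 & 1 \end{pmatrix}$ sitting in the target subgroup. As both groups again have order $2$, equality follows.

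There is no real obstacle; the argument reduces to writing down $\alpha, \beta$ explicitly, restricting the automorphism groups to $p = 2$, and verifying a four-entry function table in each case. The only conceptual care needed is to remember that the additive operation on $\Z_2^2$ and the group operation on $\Z_4$ do not correspond under the naive bitwise identification (this is exactly what makes $\alpha, \beta$ nontrivial), so one must faithfully apply the formulas of \autoref{lem:explicit_isomorphisms_2} rather than arguing via an abstract identification.
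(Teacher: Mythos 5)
Your proposal is correct and follows essentially the same route as the paper: observe that both automorphism groups have order $2$ and then identify the image of the single nontrivial automorphism under conjugation by $\alpha$ resp.\ $\beta$ (your four-entry tables check out). The only cosmetic difference is in part (a), where the paper skips the explicit computation entirely -- since conjugation by $\alpha$ embeds $\Aut(B^{2,2})$ into $\Aut(\Z_4)$ and both have order $2$, equality is immediate by counting -- whereas you verify directly that the nontrivial automorphism becomes $n \mapsto -n$.
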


\begin{proof}
\begin{enumerate}[a)]
\item \autoref{lem:automorphism_groups_p2} shows that $B^{2,2}$ has exactly two automorphisms. $\Aut(\Z_4) \allowbreak = \{ \pm 1 \}$ also is of order $2$, thus proving the claim.
\item \autoref{lem:automorphism_groups_p2} shows that the only nontrivial automorphism of $B^4$ is given by $x \mapsto 3x$. It sends $1 = \underline{01}$ to $3 = \underline{11}$ and leaves $2 = \underline{10}$ fixed.

The respective automorphism of $\Aut(\Z_2^2)$ must then send $\begin{pmatrix} 0 \\ 1 \end{pmatrix} $ to $\begin{pmatrix} 1 \\ 1 \end{pmatrix} $ and leave $\begin{pmatrix} 1 \\ 0 \end{pmatrix} $ fixed and therefore is given by the matrix $\begin{pmatrix} 1 & 1\\ 0 & 1 \end{pmatrix} $.
\end{enumerate}
\end{proof}

Using this identification and repeating the arguments preceeding \autoref{thm:coarse_classification_not_2} we directly get the

\begin{thm}[Coarse classification] \label{thm:coarse_classification_2}
The braces of order $4q$ with $q > 3$ are given by
\begin{enumerate}[i)]
\item $T^{2,2,q}_{\omega,\mu} = \Z_2^2 \oplus \Z_q$, with multiplication
\[
\left( \begin{pmatrix}
x_1 \\ y_1
\end{pmatrix},a_1 \right) \cdot
\left( \begin{pmatrix}
x_2 \\ y_2
\end{pmatrix},a_2 \right) =
\left(
\begin{pmatrix}
0 \\ 0
\end{pmatrix},
(\omega^{x_1}\mu^{x_2}-1)a_2 \right)
\]
with $\omega,\mu \in \{ \pm 1 \} \leq \Z_q^{\times}$.
\item $T^{4,q}_{\omega} = \Z_4 \oplus \Z_q$, with multiplication
\[
(x_1,a_1) \cdot (x_2,a_2) = \left( 0, (\omega^{x_1}-1)a_2 \right)
\]
with $\omega \in \Z_q^{\times}$ fulfilling $\omega^4 = 1$,
\item $B^{2,2,q}_{\omega} = \Z_2^2 \oplus \Z_q$, with multiplication
\[
\left( \begin{pmatrix}
x_1 \\ y_1
\end{pmatrix},a_1 \right) \cdot
\left( \begin{pmatrix}
x_2 \\ y_2
\end{pmatrix},a_2 \right) = 
\left(
\begin{pmatrix}
y_1 y_2 \\ 0
\end{pmatrix},
(\omega^{\underline{x_1y_1}} -1)a_2 \right),
\]
with $\omega \in \Z_q^{\times}$ fulfilling $\omega^4 = 1$,
\item $B^{4,q}_{\omega,\mu} = \Z_4 \oplus \Z_q$, with multiplication
\[
(\underline{x_1y_1},a_1) \cdot (\underline{x_2y_2},a_2) = \left(2 \ \underline{x_1y_1} \ \underline{x_2y_2} \ , (\omega^{x_1} \mu^{y_1} -1)a_2 \right),
\]
with $\omega,\mu \in \{ \pm 1 \} \leq \Z_q^{\times}$.
\end{enumerate}
\end{thm}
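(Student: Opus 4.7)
The plan is to mirror the derivation of \autoref{thm:coarse_classification_not_2}, substituting $p=2$ and replacing the isomorphisms $\gamma,\delta$ by the isomorphisms $\alpha,\beta$ from \autoref{lem:explicit_isomorphisms_2}. The backbone of the argument is already in place: by \autoref{thm:p2q_braces_are_semidirect}, every brace of order $4q$ with $q>3$ decomposes externally as $A_2 \ltimes A_q$, where $A_2$ is one of the four braces $T^{2,2}, T^4, B^{2,2}, B^4$ enumerated in \autoref{thm:bachiller_classification} and $A_q \cong \Z_q$ carries an $A_2$-module structure. So it suffices to enumerate, for each choice of $A_2$, the possible $A_2$-module structures on $\Z_q$ and then write down the resulting semidirect product.

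The enumeration proceeds via \autoref{prop:modules_are_circ_modules}: an $A_2$-module structure on $\Z_q$ is the same data as a group homomorphism $\varphi: A_2^{\circ} \to \Aut(\Z_q) \cong \Z_q^{\times}$, and the induced brace multiplication on the module side is $a \cdot x = \varphi(a) x - x = (\varphi(a) - 1)x$. For the trivial braces $T^{2,2}$ and $T^4$, the adjoint group coincides with the additive group, so $\Hom(A_2^{\circ}, \Z_q^{\times})$ is parametrized respectively by pairs $(\omega,\mu) \in \Z_q^{\times}$ with $\omega^2 = \mu^2 = 1$, and by $\omega \in \Z_q^{\times}$ with $\omega^4 = 1$. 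For $B^{2,2}$ and $B^4$, I would use \autoref{lem:explicit_isomorphisms_2} to replace $A_2^{\circ}$ by $\Z_4$ (via $\alpha$) and by $\Z_2^2$ (via $\beta$) respectively; this identifies $\Hom((B^{2,2})^{\circ}, \Z_q^{\times})$ with $\Hom(\Z_4, \Z_q^{\times})$, parametrized by one root $\omega$ with $\omega^4 = 1$, and $\Hom((B^4)^{\circ}, \Z_q^{\times})$ with $\Hom(\Z_2^2, \Z_q^{\times})$, parametrized by two signs $\omega,\mu$.

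Assembling the semidirect product as in \autoref{def:semidirect_product}, I would read the $A_2$-component of the multiplication directly off \autoref{thm:bachiller_classification} and take the $\Z_q$-component from the module-action formula above. The use of $\alpha$ and $\beta$ converts elements $\bigl(\begin{smallmatrix} x \\ y \end{smallmatrix}\bigr) \in B^{2,2}$ and $\underline{xy} \in B^4$ into exponents, producing precisely the expressions $\omega^{\underline{x_1 y_1}}$ in $B^{2,2,q}_{\omega}$ and $\omega^{x_1} \mu^{y_1}$ in $B^{4,q}_{\omega,\mu}$ that appear in the theorem. I do not expect a serious obstacle: the substantive content (semidirect decomposition, module/homomorphism correspondence, and the explicit form of $A_2^{\circ}$ in each of the four cases) has already been established, and what remains is the bookkeeping of substituting the four descriptions of $A_2^{\circ}$ into the semidirect product recipe and recording the four resulting families.
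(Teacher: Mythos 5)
Your proposal is correct and follows exactly the paper's own route: the paper derives this theorem by ``repeating the arguments preceeding'' the coarse classification for $p\neq 2$, i.e.\ combining \autoref{thm:p2q_braces_are_semidirect}, \autoref{prop:modules_are_circ_modules}, and the explicit isomorphisms $\alpha,\beta$ of \autoref{lem:explicit_isomorphisms_2} in precisely the way you describe. Nothing further is needed.
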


Again we continue with determining non-isomorphic representants for the isomorphism classes of the braces described above.

\subsection{\texorpdfstring{$4 \nmid q-1$}{4 -| q-1}:}~

In this case the image of $\Hom(A_2^{\circ},\Z_q)$ can only consist of the elements of order dividing $2$, i.e. of $\pm 1$. We will identify this subgroup with $\Z_2$.

\begin{enumerate}[a)]
\item $A_2 \cong T^{2,2},T^4$:

This can be subsumed under the discussions of the general cases $T^{p,p}$ and $T^{p^2}$ from \autoref{subsec:p_part_2}, resulting in the four non-isomorphic braces $T^{2,2,q}_{\pm 1,1}$ and $T^{4,q}_{\pm 1}$.

\item $A_2 \cong B^{2,2}$:

Under the isomorphism $(B^{2,2})^{\circ} \cong \Z_4$, the action on $\Hom(\Z_4,\Z_2)$ consists of changing signs in the argument. But $\varphi(\pm x) = \pm \varphi(x) = \varphi(x)$ for the values of $\varphi$ lie in $\Z_2$.

\item $A_2 \cong B^4$:

Using the isomorphism $(B^4)^{\circ} \cong \Z_2^2$, the action on $\Hom(\Z_2^2,\Z_2)$ is represented by right-multiplying row vectors $(x \ y)$ by upper unitriangular matrices.

Under this action $\Hom(\Z_2^2,\Z_2)$ decomposes as a disjoint union of the orbits of $(0 \ 0)$, $(0 \ 1)$ and $(1 \ 0)$, resulting in the three braces $B^4_{\pm 1,1}$ and $B^4_{1,-1}$.
\end{enumerate}

\subsection{\texorpdfstring{$4 \mid q-1$}{4 | q-1}:}~

Now the image may additionally contain elements of order $4$ which however only arise when $A_2^{\circ}$ contains elements of order $4$ - which make up a subgroup isomorphic to $\Z_4$ in $\Z_q^{\times}$.

This will only happen for $T^4$ and $B^{2,2}$.

We must now determine the orbits of $\Hom(A_2^{\circ},\Z_4)$ under the action of $\Aut(A_2)$:

\begin{enumerate}[a)]
\item $A_2 \cong T^4$:

The proof of this case is contained in \autoref{subsec:p_part_3} for this is a special case of the more general case $A_p \cong T^{p^2}$, resulting in the three braces $T^{4}_{\pm 1}, T^{4,q}_{\omega}$ where $\omega \in \Z_q^{\times}$ is a fixed $4$-th root of unity. 
\item $A_2 \cong B^{2,2}$:

Identifying $(B^{2,2})^{\circ}$ with $\Z_4$, the action on $\Hom(\Z_4,\Z_4)$ just consists of changing the sign of a homomorphism.

Representing the elements of $\Hom(\Z_4,\Z_4)$ as multiplications by elements of $\Z_4$, the orbits become therefore represented by the elements $0,1,2$, leading us to the following three braces:

$B^{2,2,q}_{\pm 1}$ and $B^{2,2,q}_{\omega}$ with $\omega \in \Z_q^{\times}$ being a fixed $4$-th root of unity.
\end{enumerate}

\subsection{Conclusion}

\begin{thm}[Fine classification] \label{thm:fine_classification_p_2}
Every brace of order $4q$ with $q > 3$ is isomorphic to \emph{exactly} one of the following braces:
\begin{itemize}
\item
\begin{enumerate}[a)]
\item the two braces $T^{2,2,q}_{\pm 1,1}$,
\item the two braces $T^{4,q}_{\pm 1}$,
\item the two braces $B^{2,2,q}_{\pm 1}$,
\item the three braces $B^{4,q}_{1,\pm 1}$, $B^{4,q}_{-1,1}$.
\end{enumerate}

\item if $4 \mid q-1$, additionally one of
\begin{enumerate}[a)]\setcounter{enumi}{4}
\item $T^{4,q}_{\omega}$
\item $B^{2,2,q}_{\omega}$
\end{enumerate}
where $\omega \in \Z_q^{\times}$ is a fixed $4$-th root of unity.
\end{itemize}
\end{thm}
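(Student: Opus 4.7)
The plan is to combine \autoref{thm:coarse_classification_2} with \autoref{cor:module_orbits_under_ap}. The coarse classification already exhibits every brace of order $4q$ as a semidirect product $A_2 \ltimes \Z_q$, parametrised by a choice of $A_2 \in \{T^{2,2}, T^4, B^{2,2}, B^4\}$ together with a homomorphism in $\Hom(A_2^\circ, \Z_q^\times)$. By \autoref{cor:module_orbits_under_ap}, two such semidirect products with the same $A_2$ are isomorphic if and only if the corresponding homomorphisms lie in the same $\Aut(A_2)$-orbit. The proof therefore reduces to enumerating these orbits in each case.

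I would first identify the target: since $q > 3$ is an odd prime, $\Z_q^\times$ is cyclic of order $q-1$. If $q \equiv 3 \pmod 4$, the only elements of $4$-power order are $\pm 1$ and every homomorphism $A_2^\circ \to \Z_q^\times$ factors through $\{\pm 1\} \cong \Z_2$; if $q \equiv 1 \pmod 4$, the image lies in the unique cyclic subgroup of order $4$, which I identify with $\Z_4$. This splits the analysis into the two cases listed in the theorem.

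Next I would run through the four possibilities for $A_2$, using the explicit isomorphisms $\alpha, \beta$ of \autoref{lem:explicit_isomorphisms_2} to identify $A_2^\circ$ with $\Z_4$ or $\Z_2^2$, and the transported automorphism groups of \autoref{lem:translating_automorphism_groups_2} to describe the action. For $T^{2,2}$ (adjoint group $\Z_2^2$, full $GL_2(\Z_2)$-action) and $T^4$ (adjoint group $\Z_4$, full $\Z_4^\times$-action), the enumeration is parallel to the corresponding specialisations in \autoref{sec:case_p_not_2} and produces $2$ orbits apiece in the $\Z_2$-target case, with $T^4$ contributing one further orbit in the $\Z_4$-target case. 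For $B^{2,2}$ the automorphism group is $\{\pm 1\} \leq \Aut(\Z_4)$: this acts trivially on $\Hom(\Z_4,\Z_2)$ (since $\pm 1$ coincide in $\Z_2$), yielding $2$ orbits, and by negation on $\Hom(\Z_4,\Z_4)$, identifying $a$ with $-a$ and yielding the three orbits $\{0\}$, $\{\pm 1\}$, $\{2\}$ for a net gain of one orbit when $4 \mid q-1$. For $B^4$ the automorphism group consists of the two upper unitriangular $2\times 2$ matrices over $\Z_2$, which has no element of order $4$, so only the $\Z_2$-target case contributes; right-multiplication by $\bigl(\begin{smallmatrix}1 & 1 \\ 0 & 1\end{smallmatrix}\bigr)$ fixes $(0,0)$ and $(0,1)$ and swaps $(1,0)$ with $(1,1)$, producing three orbits.

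The only real obstacle is bookkeeping: for each orbit one must select a canonical representative and match it to the parametrisation of \autoref{thm:coarse_classification_2}, so that the resulting brace names line up with the ones displayed in the statement (for instance, the three $B^4$-orbits must be identified with $B^{4,q}_{1,1}$, $B^{4,q}_{1,-1}$, $B^{4,q}_{-1,1}$). Adding up the contributions gives $2+2+2+3 = 9$ isomorphism classes when $q \equiv 3 \pmod 4$, and two additional classes (one from $T^4$, one from $B^{2,2}$, arising from genuinely order-$4$ homomorphisms) when $4 \mid q-1$, in total $11$, which is exactly the enumeration asserted by the theorem.
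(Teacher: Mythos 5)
Your proposal is correct and follows essentially the same route as the paper: the argument is exactly the orbit count carried out in the two subsections preceding the theorem, with the same identifications via $\alpha,\beta$, the same orbit representatives, and the same totals $2+2+2+3=9$ and $9+2=11$. The one slip is in your $B^4$ case for $4\mid q-1$: the reason no new braces arise there is that the adjoint group $(B^4)^{\circ}\cong\Z_2^2$ has exponent $2$, so every homomorphism into the order-$4$ subgroup of $\Z_q^{\times}$ already lands in its $2$-torsion --- it has nothing to do with the automorphism group of $B^4$ lacking elements of order $4$ (and the same reasoning is what disposes of $T^{2,2}$ in that case, which you should state explicitly).
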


Summing everything up, we get the result

\begin{cor} \label{cor:counting_p_2}
For $q > 3$ prime holds
\[
b(4q) = \begin{cases}
9 & 4 \nmid q-1 \\
11 & 4 \mid q-1.
\end{cases}
\]
\end{cor}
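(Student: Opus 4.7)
The plan is to simply tally the isomorphism classes enumerated in \autoref{thm:fine_classification_p_2}, which already lists exactly one representative from each isomorphism class of braces of order $4q$ (for $q>3$ prime). Since that theorem asserts that every such brace is isomorphic to \emph{exactly} one entry of its list, $b(4q)$ equals the cardinality of the relevant list.

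First I would handle the base case $4 \nmid q-1$: adding the counts from items (a)--(d) of the theorem yields $2 + 2 + 2 + 3 = 9$. Then for the case $4 \mid q-1$ the theorem instructs us to adjoin one brace from item (e) and one from item (f), which contributes an extra $1 + 1 = 2$ braces, giving $9 + 2 = 11$ in total. Substituting these two counts into the piecewise definition of $b(4q)$ yields the stated formula.

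The only thing worth double-checking is that the enumeration in \autoref{thm:fine_classification_p_2} is indeed exhaustive and irredundant; but this follows directly from \autoref{thm:coarse_classification_2} (which lists all possible semidirect products $A_2 \ltimes \Z_q$ by \autoref{thm:p2q_braces_are_semidirect}) together with the orbit analyses in the two preceding subsections, which select one representative per $\Aut(A_2)$-orbit on $\Hom(A_2^{\circ},\Z_q^{\times})$ as justified by \autoref{cor:module_orbits_under_ap}. There is no real obstacle here beyond bookkeeping, since the conceptual work --- the reduction to a semidirect product, the translation to homomorphisms into $\Z_q^{\times}$, and the orbit counts for each of the four possible $A_2$ --- has already been completed.
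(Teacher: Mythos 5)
Your proposal is correct and matches the paper exactly: the corollary is obtained by tallying the representatives in \autoref{thm:fine_classification_p_2}, giving $2+2+2+3=9$ in the base case and two additional braces when $4 \mid q-1$, for a total of $11$. The paper's own justification is precisely this bookkeeping ("summing everything up"), with exhaustiveness and irredundancy already carried by the fine classification theorem.
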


\section*{Acknowledgements}

I would like to express my deep gratitude to Wolfgang Rump and Leandro Vendramin for valuable suggestions regarding the quality of the presentation.

\bibliographystyle{halpha}

\end{document}